\documentclass[10pt]{article}
\usepackage[T1]{fontenc}
\usepackage[utf8]{inputenc}
\usepackage{amsmath,times,mathptmx}
\usepackage{amsmath,amssymb,amsthm,mathtools}
\usepackage{bm}
\usepackage{geometry}
\usepackage{microtype}
\usepackage{enumitem}
\usepackage{booktabs,array,tabularx}
\usepackage{xcolor}
\usepackage{tikz}
\usetikzlibrary{shapes.geometric}
\usepackage{hyperref}
\usepackage{fancyhdr}
\usepackage{cleveref}
\usepackage{indentfirst}
\usepackage{amsmath,amsthm} 
\hypersetup{colorlinks=true,linkcolor=cyan!55!black,citecolor=cyan!55!black,urlcolor=cyan!55!black}
\allowdisplaybreaks
\numberwithin{equation}{section}

\setlist[enumerate]{leftmargin=2.1em,label=(\roman*),itemsep=0.15em,topsep=0.3em}
\newtheorem{theorem}{Theorem}[section]
\newtheorem{lemma}{Lemma}
\newtheorem{claim}{Claim}
\theoremstyle{definition}

\newtheorem{problem}{Problem}

\newcommand{\Gcal}{\mathcal{G}}

\begin{document}
\thispagestyle{empty}
\enlargethispage{2\baselineskip}
\title{Independence number, essential connectivity and the distance spectral radius of graphs
\footnote{Supported by Tianshan Talent Training Program (No. 2024TSYCCX0013), Natural Science Foundation of Xinjiang Uygur Autonomous Region (No. 
2024D01C41), the Basic scientific research in universities of Xinjiang Uygur Autonomous Region (XJEDU2025P001) and NSFC (No. 12361071).}}
\author{{Shuang Ding, Dan Li\thanks{Corresponding author. E-mail: ldxjedu@163.com.},  Yuanyuan Chen}\\
{\footnotesize College of Mathematics and System Science, Xinjiang University, Urumqi 830046, China}}
\date{}

\maketitle {\flushleft\large\bf Abstract:}
An independent set of a graph $G$  is a subset of $V_G$, no two of which are adjacent. The cardinality of a maximum independent set in a graph $G$ is called the
independence number of $G$, denoted by $\alpha(G)$. The essential connectivity $\kappa'(G)$ of a graph $G$ is denoted as the minimum number of vertices of $G$ whose removal produces a disconnected graph with at least two non-trivial components. In this paper , we determine the $n$-vertex connected graphs with given independence number and essential connectivity that attain the minimum distance spectral radius, and fully characterize the corresponding extremal graphs.

\vspace{0.1cm}
\begin{flushleft}
\textbf{Keywords:}  Distance spectral radius; Independence number; Essential connectivity

\end{flushleft}
\textbf{AMS Classification:} 05C50; 05C35

\section{Introduction}\label{sec:1}
In this paper, we only consider simple, connected and undirected graphs. Let $G=(V(G),E(G))$ be a  graph with vertex set 
$V(G)$(or simply $V_G$)=$\{u_1,u_2,..., u_n\}$ and edge set $E(G)$(or simply $E_G$). The length of a shortest path connecting vertex \(u_i\) and vertex \(u_j\) is called the distance between \(u_i\) and \(u_j\), denoted $d_G(u_i,u_j)$ (or simply \(d_{ij}\)). The distance matrix of $G$, denoted by $D(G) = (d_{ij})$, is an $n\times n$  real symmetric matrix with zeros on the diagonal,  whose $(i,j)$-entry is $d_G(u_i,u_j)$ (or \(d_{ij}\)). Hence,  all its eigenvalues are real. The largest eigenvalue $\lambda_1(G)$ of $D(G)$ is called the distance spectral radius of $G$. In addition, since $G$ is connected,  by the Perron-Frobenius theorem, there exists a unique positive unit eigenvector $X$ of $D(G)$ corresponding to $\lambda_1(G)$ such that $D(G)X = \lambda_1(G)X$, which is called the Perron vector of $D(G)$.

A subset $F \subseteq V_G$ is called an independent set of $G$ if no two vertices of $F$ are adjacent in $G$. The cardinality of a maximum independent set in a graph $G$ is called the
independence number of $G$, denoted by $\alpha(G)$. A number of results concerning the independence number of graphs. Feng, Yu and Ili\'c \cite{L.H. Feng}  studied the Laplacian spectral radius of unicyclic graphs with given independence number and characterized the extremal graphs completely. Choi and Park \cite{J. Choi} determined the connected graphs of order $n$ achieving the minimal adjacency spectral radius when the independence number equals $\left\lceil\frac{n}{2}\right\rceil-1$. Liu and Wang \cite{X.C. Liu} characterized the graphs attaining the maximum and minimum $A_\alpha$ spectral radius among $n$-vertex connected graphs with independence number $n-4$ for $\alpha\in\left[\frac12,1\right)$. Additional results are available in \cite{C.M. Conde,Y.Q. Cui,Z.Z. Lou}.
The (vertex) connectivity $\kappa(G)$ of a graph $G$ is the minimum number of vertices whose removal disconnects the graph or leaves a single vertex; the edge connectivity $\lambda(G)$ is the minimum number of edges whose removal disconnects the graph. Although these parameter are pivotal metrics for evaluating network reliability, they often underestimates the inherent robustness of large networks. To overcome this limitation, Harary \cite{F. Harary} put forward the notion of conditional connectivity by requiring certain restrictions on the components of $G-T$, where $T$ denotes a vertex subset or an edge subset of $G$. In particular, the essential connectivity $\kappa'(G)$ of $G$ is denoted as the minimum number of vertices of $G$ whose removal produces a disconnected graph with at least two non-trivial components. Many scholars are interested in the study of the extremal problem  regarding the conditional connectivity of graphs. Wang and Li \cite{Y. Wang} gave a sharp upper bound for the cyclic edge connectivity of graphs with given minimum degree, and characterized the extremal graphs attaining the maximum adjacency spectral radius for graphs with prescribed order, minimum degree and cyclic edge connectivity. Fan, Gu and Lin \cite{D.D. Fan} characterized the graphs and strongly connected digraphs achieving maximum adjacency spectral radius for given $l$-connectivity, minimum degree and $l$-edge-connectivity.  For more details, see \cite{ W.X. Ding, W}.

The study of the distance spectral radius of graphs is a crucial topic in the theory of graph spectra. In particular, the problem of determining graphs with prescribed parameters achieving the maximum or minimum distance spectral radius has attracted many researchers' attention. Lin and Feng \cite{H.Q. Lin} investigated the distance spectral radius of connected graphs with prescribed independence number, derived a lower bound, and completely characterized the graphs achieving maximum distance spectral radius for several ranges of independence number.
Hu, Lin, and Zhang \cite{Y.L. Hu} characterized the extremal graphs that minimize the distance spectral radius over connected graphs and strongly connected digraphs with a prescribed $h$-extra $l$-component connectivity. They further studied the case of graphs with given $l$-connectivity and minimum degree.
Zhang, Li and Ding \cite{D.X. Zhang} characterized the extremal graphs achieving the minimum distance spectral radius for connected graphs with prescribed essential connectivity and minimum degree, as well as for strongly connected digraphs with given essential connectivity. Zhang, Li and Gutman \cite{M.J. Zhang} characterized the graphs attaining the minimum distance spectral radius among $n$ vertex graphs with given connectivity and diameter, as well as those with given connectivity and independence number, and recovered several known results as corollaries. For more results, 
see \cite{D. Fan,Z.G. Zhang,Y,H.Y. Lin,J.H. Xin}. Motivated by the distance spectral results discussed above, we turn our attention to the extremal problem involving essential connectivity and independence number. This naturally leads to the following problem.
\begin{problem}
Which graphs attain the minimum distance spectral radius among all connected graphs of order $n\ge k' + \alpha + 2$ with fixed essential connectivity $k'$ and independence number $\alpha\ge2$?
\end{problem}
Let $\Gcal^\alpha_{n,k'}$ denote the family of all $n$ vertex graphs with essential connectivity $\kappa'(G)=k'$ and independence number $\alpha(G)=\alpha$. As usual,  $K_n$ denote the complete graph on $n$ vertices. For any two disjoint graphs $G$ and $H$, the union of graphs $G$ and $H$ is defined by
$G\cup H=\bigl(V_G\cup V_H,\,E_G\cup E_H\bigr)$.
The join of graphs $G$ and $H$, denoted by $G\vee H$, is the graph with vertex set $V(G\vee H)=V_G\cup V_H$,
and edge set $E(G\vee H)=E_G\cup E_H\cup\{wu\mid w\in V_G,\,u\in V_H\}$. In this paper, we provide the solution to Problem 1. Our main result is as follows.
\begin{theorem}\label{thm:1}
Let $G \in \Gcal^{\alpha}_{n,k'}$ with  $\alpha\ge 2$.
\begin{enumerate}[label=(\roman*),ref=(\roman*)]
\item\label{thm:main:i} If $n \ge k' + \alpha + 3$, then
$\lambda_1(G) \ge \lambda_1(K_{k'} \vee ((K_{n-k'-\alpha-1} \vee (\alpha-1)K_1) \cup K_2))$,
the equality holds if and only if $G\cong K_{k'} \vee((K_{n-k'-\alpha-1} \vee (\alpha-1)K_1)\cup K_2)$.

\item\label{thm:main:ii} If $n = k'+\alpha+2$, then
$\lambda_1(G)\geq\lambda_1(K_{k'} \vee ((K_1 \vee \lfloor \alpha/2 \rfloor K_1) \cup (K_1 \vee \lceil \alpha/2 \rceil K_1)))$,
the equality holds if and only if $G\cong K_{k'} \vee ((K_1 \vee \lfloor \alpha/2 \rfloor K_1) \cup (K_1 \vee \lceil \alpha/2 \rceil K_1))$.
\end{enumerate}
\end{theorem}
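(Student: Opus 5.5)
We argue by edge-maximality together with the monotonicity of the distance spectral radius: if $H$ is obtained from a connected graph $G$ by adding an edge, then $D(H)\le D(G)$ entrywise with strict inequality somewhere, so $\lambda_1(H)<\lambda_1(G)$ by the Perron--Frobenius theorem.

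\emph{Step 1: structure of a minimizer.} Let $G$ attain the minimum of $\lambda_1$ over $\Gcal^\alpha_{n,k'}$. Fix a minimum essential cut $S$ with $|S|=k'$, so that $G-S$ has at least two non-trivial components. Adding edges can only decrease $\alpha$ and can only increase $\kappa'$, so the parameters must be re-checked after each addition. Using this, we reduce to the following form. $S$ is a clique joined to every vertex of $G-S$; this keeps $S$ an essential cut and does not change $\alpha$, since every vertex of $S$ already lies in a maximum independent set only with vertices it can be made adjacent to. The graph $G-S$ has exactly two components $G_1,G_2$, because merging extra components into one (by edges) preserves the non-triviality of two components. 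Finally, each $G_i$ is edge-maximal subject to $\alpha(G_1)+\alpha(G_2)=\alpha$.

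Since $S$ is complete to the rest, $\alpha(G)=\alpha(G_1)+\alpha(G_2)$. A connected graph on $n_i$ vertices that is edge-maximal with independence number $a_i$ is $K_{n_i-a_i}\vee a_iK_1$ when it is built as complete split-type. More generally, the edge-maximal graphs with $\alpha=a_i$ are disjoint unions of $a_i$ cliques, but connectivity within $G_i$ is not required; we therefore argue that the distance-optimal choice is $K_{n_i-a_i}\vee a_iK_1$. Indeed, a vertex of $G_i$ outside the independent part can be joined to all of $G_i$ without changing $\alpha$. We also need $a_i\ge1$, and each $G_i$ must have at least 2 vertices. So
\[
G\cong K_{k'}\vee\bigl((K_{n_1-a_1}\vee a_1K_1)\cup(K_{n_2-a_2}\vee a_2K_1)\bigr),
\]
where $n_1+n_2=n-k'$, $a_1+a_2=\alpha$, and $n_i\ge 2$ (with $K_0\vee 1K_1$ excluded when $n_i\ge2$).

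\emph{Step 2: compare within this family.} In this family every distance is 1 or 2. Hence $D=2(J-I)-A$, and $\lambda_1$ is governed by the number and placement of non-adjacent pairs. The non-edges of $G$ are the pairs inside each $a_iK_1$, the pairs between $G_1$ and $G_2$, and the pairs between the two independent parts. We compare configurations by transferring vertices between the two sides. Let $X$ be the Perron vector; because of the symmetry, $X$ is constant on each of the vertex classes. To compare $\lambda_1$ of two graphs $G$ and $G'$ in the family, we use the Rayleigh quotient
\[
\lambda_1(G')-\lambda_1(G)\ \ge\ X^T\bigl(D(G')-D(G)\bigr)X,
\]
or, where a cleaner inequality is needed, we write the quotient (equitable partition) matrices and compare their characteristic polynomials.

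For (i), with $n-k'\ge\alpha+3$, we want to show that the optimum is $n_2=2$, $a_2=1$ (the $K_2$ component). Moving a clique vertex from $G_2$ to $G_1$ increases the cross non-edges from $n_1n_2$ to $(n_1+1)(n_2-1)$ only when $n_1<n_2-1$, so the product $n_1n_2$ should be minimized, which means making one side as small as possible. Independence vertices also create non-edges among themselves, and putting all $\alpha-1$ remaining independent vertices with the big clique is optimal. The product argument at equal size suggests extremality of the unbalanced split, and the Perron-weight comparison makes this rigorous.

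For (ii), $n-k'=\alpha+2$ forces $n_i=a_i+1$, so each $G_i$ is a star $K_1\vee a_iK_1$. Then the remaining comparison is between $(a_1,a_2)$ and $(a_1+1,a_2-1)$. The total count of non-edges is
\[
\binom{a_1}{2}+\binom{a_2}{2}+(a_1+1)(a_2+1),
\]
which is a constant plus $a_1a_2$ terms cancelling. So counting alone does not decide, and we must compare characteristic polynomials of the $5\times5$ quotient matrices, which should favour the balanced split.

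\textbf{Main obstacle.} The delicate steps are the quotient-matrix comparison in (ii) and, in (i), proving rigorously that the unbalanced split beats every other $(n_i,a_i)$ configuration. For (i) we would first try a vertex-moving Rayleigh argument, showing that the relevant Perron entries are ordered so that each move toward $K_2$ strictly decreases $\lambda_1$. Where this fails, we fall back on the characteristic-polynomial difference evaluated at the larger root.
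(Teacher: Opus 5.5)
\textbf{Gap in Step 1.} Your claim that the minimum essential cut $S$ can be made a clique joined to all of $G-S$ ``without changing $\alpha$'' is unjustified, and this is where most of the paper's work lies. Adding edges at cut vertices destroys every maximum independent set that uses cut vertices. Nothing about edge-maximality of a minimizer prevents such sets, and the sentence you offer as justification does not establish it.

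Here is a concrete counterexample. Take $k'\ge\alpha\ge 3$ and $n\ge k'+\alpha+3$, and let $G=(K_{k'-\alpha}\vee\alpha K_1)\vee(K_{n-k'-2}\cup K_2)$. Then $G$ lies in $\Gcal^{\alpha}_{n,k'}$, and its unique maximum independent set is the $\alpha K_1$ inside the cut. Every missing edge either lowers $\alpha$ (if it lies inside $\alpha K_1$) or raises $\kappa'$ (if it joins the two components). So $G$ is edge-maximal in the class, yet it is not in your family $K_{k'}\vee\bigl((K_{n_1-a_1}\vee a_1K_1)\cup(K_{n_2-a_2}\vee a_2K_1)\bigr)$. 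In part (ii) the analogous graphs have $g'\ge 3$ independent vertices in the cut and two clique components. Such graphs can only be eliminated by a spectral comparison, not by adding edges. Relatedly, your remark that edge-maximal graphs with independence number $a$ are disjoint unions of $a$ cliques is wrong: they are exactly $K_{n-a}\vee aK_1$.

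\textbf{How the paper handles it.} The paper fixes both a maximum independent set $S'$ and the cut $T'$, and adds only edges not inside $S'$ (Claim 1). This leaves $g=|S'\cap T'|$ as a free parameter, which must then be forced to zero.
\begin{enumerate}
\item In part (i) this is Claim 4. Some cases use edge rotations ($G_4$, $G_5$, $G_6$) with Perron-vector estimates. The case $a_1=0$ (exactly the example above) and the case $h=1$ instead compare the characteristic polynomials $h_1$, $h_2$ of equitable quotient matrices with the polynomial $f$ of the extremal graph $\widetilde H$.
\item In part (ii), nearly the entire proof is the argument that $g'=0$, via the graph $H=G-E_2'+E_1'$ and the sign analysis of $\Phi$.
\end{enumerate}

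\textbf{Step 2.} Even inside your restricted family, Step 2 for (i) is only heuristic. Counting non-edges controls the Wiener index, which gives only the one-sided bound $\lambda_1\ge 2W/n$ for each graph separately; it cannot compare spectral radii. The rigorous replacement is the three-case Perron-vector analysis of Claim 3. For (ii), your balancing step corresponds to the paper's Lemma 4. The paper does it by a Perron-vector comparison of the two hub entries rather than by $5\times 5$ characteristic polynomials. Your plan there is plausible but not carried out.
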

This paper is organized as follows. Some preliminary concepts and lemmas are presented in Section \ref{sec:2}. Sections \ref{sec:3} and \ref{sec:4} are dedicated to proving parts \ref{thm:main:i} and \ref{thm:main:ii} of Theorem \ref{thm:1}, respectively.

\section{Preliminaries} \label{sec:2}

The {\it Wiener index}\/ of a connected graph $G$ of order $n$ is defined as
\[
W(G)=\sum_{i<j} d_{ij}.
\]
Then we have
\begin{align*}
\lambda_1(D(G))=\max_{\mathbf{x}\in\mathbb{R}^n}\frac{\mathbf{x}^T D(G)\mathbf{x}}{\mathbf{x}^T\mathbf{x}}
\ge \frac{\mathbf{1}^T D\mathbf{1}}{\mathbf{1}^T\mathbf{1}}=\frac{2W(G)}{n},
\tag{1}\label{eq:1}
\end{align*}
where $\mathbf{1}=(1,1,\dots,1)^T$.

In what follows, the next three lemmas will be needed.
\begin{lemma}[{\cite{C.D. Godsil}}]\label{lemma:1}
Let $e$ be an edge of $G$ such that $G-e$ is connected. Then $\lambda_1(G)<\lambda_1(G-e)$.
\end{lemma}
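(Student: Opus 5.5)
The plan is to compare the two distance matrices entrywise and then use the Rayleigh quotient at the Perron vector of $D(G)$. Let $e=uv$ and write $G'=G-e$. Since $G'$ is connected, every distance in $G'$ is finite, so $D(G')$ is a well-defined nonnegative symmetric matrix.

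\textbf{Entrywise comparison.} I will first show that $D(G')\ge D(G)$ entrywise, with strict inequality in the $(u,v)$ and $(v,u)$ entries. Every path in $G'$ is also a path in $G$, so $d_{G'}(a,b)\ge d_G(a,b)$ for all pairs $a,b$. For the pair $u,v$ we have $d_G(u,v)=1$. Since $uv$ is no longer an edge of $G'$ and $G'$ is connected, $d_{G'}(u,v)\ge 2$.

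\textbf{Rayleigh quotient.} Let $X=(x_1,\dots,x_n)^T$ be the Perron vector of $D(G)$. It is a positive unit vector with $D(G)X=\lambda_1(G)X$, as guaranteed by the Perron--Frobenius theorem quoted in the Introduction. By the variational characterization already used in \eqref{eq:1},
\[
\lambda_1(G')\ \ge\ X^TD(G')X \;=\; X^TD(G)X+\sum_{a,b}\bigl(d_{G'}(a,b)-d_G(a,b)\bigr)x_ax_b .
\]
Every term in the sum is nonnegative because all entries of $X$ are positive and the distances can only grow. The two terms indexed by $(u,v)$ and $(v,u)$ together contribute at least $2(2-1)x_ux_v=2x_ux_v>0$. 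Hence
\[
\lambda_1(G')\ \ge\ \lambda_1(G)+2x_ux_v\ >\ \lambda_1(G),
\]
which is the required strict inequality.

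\textbf{Main obstacle.} The only delicate point is that the inequality must be strict. This rests entirely on the strict positivity of the Perron vector, which needs $D(G)$ to be irreducible. That holds because $G$ is connected, so all off-diagonal entries of $D(G)$ are positive. Connectivity of $G-e$ is used only to guarantee that $D(G-e)$ has finite entries.
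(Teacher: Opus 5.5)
Your proof is correct. The paper does not prove this lemma; it is simply quoted from Godsil. Your argument is the standard justification. Since $G-e$ is a spanning subgraph of $G$, you get $D(G-e)\ge D(G)$ entrywise, with $d_{G-e}(u,v)\ge 2>1=d_G(u,v)$. Evaluating the Rayleigh quotient of $D(G-e)$ at the positive Perron vector $X$ of $D(G)$ then gives $\lambda_1(G-e)\ge\lambda_1(G)+2x_ux_v>\lambda_1(G)$.

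This is a self-contained instance of the strict monotonicity of the Perron root of a nonnegative irreducible matrix under an entrywise increase. It is also the same device the paper uses in its transformation steps in Sections 3 and 4. There, for example, $\lambda_1(G^*)-\lambda_1(G_2)$ is bounded below by $X^T\bigl(D(G^*)-D(G_2)\bigr)X$, with $X$ the Perron vector of the graph shown to have the smaller spectral radius. You correctly chose the Perron vector of $D(G)$ rather than of $D(G-e)$, which is what makes the inequality go in the right direction.
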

Let $B$ be an $n\times n$ real matrix and $V=\{1,2,\dots,n\}$. Given a partition $\Pi=\{V_1,V_2,\dots,V_s\}$, where $V=V_1\cup V_2\cup\cdots\cup V_s$, the matrix $B$ can be written in the block form
\[
B=
\begin{pmatrix}
B_{11} & B_{12} & \cdots & B_{1s}\\
B_{21} & B_{22} & \cdots & B_{2s}\\
\vdots & \vdots & \ddots & \vdots\\
B_{s1} & B_{s2} & \cdots & B_{ss}
\end{pmatrix}.
\]
The quotient matrix of $B$ with respect to the partition $\Pi$ is the $s\times s$ matrix $M_{\Pi}(B)=(b_{ij})_{i,j=1}^{s}$, where $b_{ij}$ denotes the average row‑sum of $B_{ij}$. If every block $B_{ij}$ has constant row‑sum $b_{ij}$, then the partition $\Pi$ is called an equitable partition. In particular, if $\Pi$ is an equitable partition of $B$, then $M_{\Pi}(B)$ is called the equitable quotient matrix of $B$.
\begin{lemma}[{\cite{L.H. You}}]\label{lemma:2}
Let $B$ be a real symmetric matrix and $\lambda_1(B)$ denote the largest eigenvalue of $B$. If $M_{\Pi}(B)$ is an equitable quotient matrix of $B$, then every eigenvalue of $M_{\Pi}(B)$ is also an eigenvalue of $B$. In particular, if $B$ is a nonnegative irreducible matrix, then $\lambda_1(B)=\lambda_1(M_{\Pi}(B))$.
\end{lemma}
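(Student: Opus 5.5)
Lemma~\ref{lemma:2} can be proved by encoding the equitable partition as a matrix identity and then applying the Perron--Frobenius theorem to $B$.

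\textbf{Setup.} Let $P$ be the $n\times s$ characteristic matrix of $\Pi$: the $j$-th column of $P$ is the indicator vector of $V_j$. The partition is equitable, so for every row $r\in V_i$ the entries of row $r$ of $B$ in the columns of $V_j$ sum to $b_{ij}$. In matrix form this says exactly
\[
BP = P\,M_{\Pi}(B).
\]
The sets $V_j$ are nonempty and pairwise disjoint, so the columns of $P$ are linearly independent and $P$ has full column rank $s$.

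\textbf{First assertion.} Let $\theta$ be an eigenvalue of $M_{\Pi}(B)$ with eigenvector $\mathbf{y}\neq \mathbf{0}$, possibly complex. Then
\[
B(P\mathbf{y}) = P\,M_{\Pi}(B)\,\mathbf{y} = \theta\, P\mathbf{y}.
\]
Since $P$ has full column rank, $P\mathbf{y}\neq\mathbf{0}$. Hence $\theta$ is an eigenvalue of $B$ with eigenvector $P\mathbf{y}$, the vector that is constant on each class $V_j$. In particular every eigenvalue of $M_{\Pi}(B)$ is real when $B$ is symmetric, although this is not needed.

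\textbf{Second assertion.} Now let $B$ be nonnegative and irreducible. Each $b_{ij}$ is a row sum of a nonnegative block, so $M_{\Pi}(B)$ is a nonnegative matrix. I will not try to show that $M_{\Pi}(B)$ is itself irreducible, since the weak form of Perron--Frobenius for nonnegative matrices is enough. It gives an eigenvalue $\rho=\rho(M_{\Pi}(B))$ with a nonnegative eigenvector $\mathbf{y}\ge \mathbf{0}$, $\mathbf{y}\ne\mathbf{0}$. Then $P\mathbf{y}$ is a nonnegative nonzero eigenvector of $B$ for the eigenvalue $\rho$. For an irreducible nonnegative matrix, Perron--Frobenius says that any nonnegative eigenvector is in fact positive and belongs to the spectral radius. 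Therefore $\rho=\lambda_1(B)$.

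Combining the two parts: every eigenvalue of $M_{\Pi}(B)$ is an eigenvalue of $B$, so $\rho(M_{\Pi}(B))\le\rho(B)=\lambda_1(B)$. We have also shown that $\lambda_1(B)$ is itself an eigenvalue of $M_{\Pi}(B)$, namely $\rho(M_{\Pi}(B))$. Hence $\lambda_1(M_{\Pi}(B))=\lambda_1(B)$.

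\textbf{Main obstacle.} The only delicate point is the second assertion, where we must avoid assuming that $M_{\Pi}(B)$ inherits irreducibility. The plan sidesteps this by lifting a nonnegative Perron eigenvector of $M_{\Pi}(B)$ to $B$ and invoking the uniqueness part of Perron--Frobenius for the irreducible matrix $B$.
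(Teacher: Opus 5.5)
Your proof is correct and complete. The identity $BP=PM_{\Pi}(B)$, together with the full column rank of $P$, gives the eigenvalue inclusion. Lifting a nonnegative Perron vector $\mathbf{y}$ of $M_{\Pi}(B)$ to the nonnegative eigenvector $P\mathbf{y}$ of the irreducible matrix $B$ then forces $\rho(M_{\Pi}(B))=\rho(B)=\lambda_1(B)$, and the final identification of largest eigenvalues is handled properly.

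The paper gives no proof of this lemma; it is simply quoted from You et al. So there is no in-text argument to compare with, and what you have written is the standard proof of the cited fact.

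As a side remark, irreducibility is not actually needed for the spectral-radius part. Since $B^k\mathbf{1}=B^kP\mathbf{1}=PM_{\Pi}(B)^k\mathbf{1}$, the maximum row sums of $B^k$ and $M_{\Pi}(B)^k$ agree for every $k$. Gelfand's formula then gives $\rho(B)=\rho(M_{\Pi}(B))$ for any nonnegative $B$. Your route instead uses irreducibility essentially, through the fact that a nonnegative eigenvector must belong to the spectral radius. That costs nothing here, because the paper only applies the lemma to distance matrices of connected graphs, which are irreducible.
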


\begin{lemma}[{\cite{M.N. Ellingham}}]\label{lemma:3}
Let $A$ be a real symmetric $n\times n$ matrix and $\lambda$ an eigenvalue of $A$ with an eigenvector $X$ all of whose entries are non-negative. If $S_i(A)$ $(1\leq i\leq n)$ is the $i$th row sum of $A$, then
\[
\min_{1\leq i\leq n}S_i(A)\leq\lambda\leq\max_{1\leq i\leq n}S_i(A).
\]
Moreover, if the row sums of $A$ are not all equal and if all entries of $X$ are positive, then both inequalities above are strict.
\end{lemma}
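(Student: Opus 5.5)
The plan is to prove this statement (Lemma~\ref{lemma:3}) directly by a weighted-averaging argument. The key observation is that summing the eigen-equation over all coordinates turns $\lambda$ into a convex combination of the row sums $S_i(A)$. Neither nonnegativity of $A$ nor any Perron--Frobenius theory is needed. Only the symmetry of $A$ and the nonnegativity of $X$ are used.

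Write $X=(x_1,\dots,x_n)^T$ with $AX=\lambda X$, $X\neq 0$ and $x_i\ge 0$ for all $i$, and let $\mathbf{1}$ be the all-ones vector as in \eqref{eq:1}. Left-multiplying the eigen-equation by $\mathbf{1}^T$ gives $\mathbf{1}^TAX=\lambda\,\mathbf{1}^TX$. Since $A$ is symmetric, $\mathbf{1}^TA=(A\mathbf{1})^T=(S_1(A),\dots,S_n(A))$. Hence
\[
\sum_{i=1}^n S_i(A)\,x_i=\lambda\sum_{i=1}^n x_i .
\]
Because $X$ is nonzero with nonnegative entries, $s:=\sum_i x_i>0$. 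Dividing by $s$ gives $\lambda=\sum_i w_iS_i(A)$ with weights $w_i=x_i/s\ge 0$ and $\sum_i w_i=1$. A convex combination of the numbers $S_i(A)$ lies between their minimum and maximum, which proves $\min_i S_i(A)\le\lambda\le\max_i S_i(A)$.

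For the strict part, assume every $x_i>0$, so every weight $w_i$ is positive, and that the $S_i(A)$ are not all equal. Let $m=\min_i S_i(A)$ and $M=\max_i S_i(A)$. Then
\[
\lambda-m=\sum_i w_i\bigl(S_i(A)-m\bigr),
\]
which is a sum of nonnegative terms. At least one term is strictly positive, namely one with $S_i(A)=M>m$, so $\lambda>m$. Symmetrically, $M-\lambda=\sum_i w_i\bigl(M-S_i(A)\bigr)>0$, so $\lambda<M$.

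There is no real obstacle. The only points needing care are the use of symmetry to identify $\mathbf{1}^TA$ with the vector of row sums, and the fact that $\sum_i x_i>0$, which holds because an eigenvector is nonzero.
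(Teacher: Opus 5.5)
Your proof is correct and complete. Summing $AX=\lambda X$ over all coordinates and using symmetry to turn column sums into row sums shows that $\lambda=\sum_i S_i(A)x_i/\sum_j x_j$ is a convex combination of the row sums. When every $x_i>0$ and the $S_i(A)$ are not all equal, the positive weights make both inequalities strict.

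The paper does not prove this lemma; it only quotes it from Ellingham's work. Your weighted-average argument is the standard proof of it, so there is no alternative route to compare. As you note, it needs neither nonnegativity of $A$ nor Perron--Frobenius theory.
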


\section{Proof of Theorem \ref{thm:1}~\ref{thm:main:i}: the case  $n\ge k'+\alpha+3$. } \label{sec:3}

In this section, we determine the unique graphs in the set $\Gcal^\alpha_{n,k'}$ with integer $n\ge k'+\alpha+3$, having minimum distance spectral radius.

\begin{proof}[\bfseries Proof of Theorem \ref{thm:1}~\ref{thm:main:i}]
 Suppose that $G^*\in\Gcal^{\alpha}_{n,k'}$  is a graph that attains the minimum distance spectral radius, where $n\ge k'+\alpha+3$.  Let $S'$ be an independent set of $G^{*}$ with $|S'|=\alpha$. Since $n-k' \ge \alpha+3\ge 5$, by the definition of essential connectivity, there exists some nonempty subset $T'\subseteq V_{G^*}$ with $|T'|=k'$ such that $G^{*} -T'$
 contains at least two non-trivial connected components and let $C_1,C_2,...,C_l$ be the $l$ connected components of $G^{*} -T'$, where $l \ge 2$. For convenience, we set $T'\cap S'=\{w_1',w_2',\ldots,w_g'\}$, $ T'\setminus S'=\{w_1,w_2,\ldots,w_h\}$, $|V_{C_i}\cap S'|=a_i$, $|V_{C_i}\setminus S'|=b_i$ for $1\le i\le l$.  In the following, we provide four claims to determine the structure of the extremal graph $G^*$ to complete the proof of Theorem  \ref{thm:1}~\ref{thm:main:i}. 

\begin{claim}\label{claim:1}
 $C_i\cong K_{b_i}\vee a_iK_1$, $G^*[T']\cong K_h\vee gK_1$ and $G^*[V_{C_i}\cup T']\cong K_{h+b_i}\vee(g+a_i)K_1$ for $1\leq i\leq l$.
\end{claim}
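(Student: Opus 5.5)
The plan is a saturation argument: add every edge that cannot change $\alpha$ or $\kappa'$, and invoke Lemma~\ref{lemma:1} to show such edges must already be present in $G^*$. Concretely, let $H$ be the graph obtained from $G^*$ by adding every missing edge $uv$ such that $u,v\in V_{C_i}\cup T'$ for some single $i$, and $\{u,v\}\not\subseteq S'$. In $H$ each $V_{C_i}\setminus S'$ is a clique, $T'\setminus S'$ is a clique, all of $V_{C_i}\setminus S'$ is joined to $T'$, all of $T'\setminus S'$ is joined to $V_{C_i}$, and each vertex of $S'$ is adjacent to every non-$S'$ vertex of its ``zone'' $V_{C_i}\cup T'$. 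If $H\neq G^*$, then $G^*$ is a proper spanning subgraph of $H$, and removing the added edges one by one (every intermediate graph contains $G^*$, hence is connected) gives $\lambda_1(H)<\lambda_1(G^*)$ by Lemma~\ref{lemma:1}. It therefore suffices to show $H\in\Gcal^{\alpha}_{n,k'}$. The three isomorphisms in the claim are then read off directly: $G^*[V_{C_i}\cup T']$ is a clique on the $h+b_i$ non-$S'$ vertices joined to the independent set of $g+a_i$ vertices of $S'$, and its restrictions to $V_{C_i}$ and $T'$ give the other two statements.

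\textbf{Step 1: $\alpha(H)=\alpha$.} Clearly $\alpha(H)\le\alpha(G^*)=\alpha$, and $S'$ remains independent in $H$ since no edge inside $S'$ was added, so $\alpha(H)\ge\alpha$.

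\textbf{Step 2: $\kappa'(H)=k'$.} First, $\kappa'(H)\le k'$: no edge was added between different $C_i,C_j$, so $H-T'$ still has components $C_1,\dots,C_l$ (now with more internal edges), at least two of which are non-trivial. Second, $\kappa'(H)\ge k'$. Suppose some $T''$ with $|T''|<k'$ makes $H-T''$ have at least two non-trivial components $D_1,D_2$. We need to transfer this to $G^*$, which is the main obstacle, since adding edges could in principle raise $\kappa'$ rather than lower it. The intended argument uses the structure of $H$. Vertices of $V_{C_i}\setminus S'$ lying outside $T''$ form a clique adjacent to everything remaining in $V_{C_i}\cup T'$. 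If for two indices $i\neq j$ both $V_{C_i}\setminus S'$ and $V_{C_j}\setminus S'$ met $V_H\setminus T''$, or if $T'\setminus S'\not\subseteq T''$, one would try to show $H-T''$ has at most one non-trivial component. Any remaining separating configuration should then essentially force $T''\supseteq T'\setminus S'$ together with the non-$S'$ parts of all but one side, giving $|T''|\ge k'$. This case analysis is delicate, because vertices of $S'\cap T'$ adjacent to several components could be isolated.

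A safer fallback for Step 2 is to argue monotonically. Adding edges can never decrease $\kappa'$ below that of a graph with a fixed cut, because any essential cut of $H$ is a vertex set whose removal from the spanning subgraph $G^*$ also leaves its pieces disconnected. Every component of $H-T''$ is a union of components of $G^*-T''$, and a non-trivial component of $H-T''$ might split in $G^*$ into trivial ones. That split happens only through $S'$ vertices, and one then checks that the edges added within a zone cannot create a non-trivial component from isolated $G^*$-vertices unless a non-$S'$ vertex of that zone survives, which already had a neighbor in $G^*$ (by connectivity of the zone through $C_i$). I expect verifying this carefully to be the key technical point of the claim. Once $H\in\Gcal^{\alpha}_{n,k'}$ is established, the minimality of $G^*$ forces $H=G^*$, proving the claim.
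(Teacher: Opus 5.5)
Your overall strategy is sound. Saturating all zones at once to get $H$, Step 1, the bound $\kappa'(H)\le k'$, and the chain of edge deletions via Lemma~\ref{lemma:1} are all fine. The gap is the lower bound $\kappa'(H)\ge k'$, which you do not prove, and both routes you sketch for it fail.

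Your first sketch aims to show that $H-T''$ has at most one non-trivial component whenever two different sets $V_{C_i}\setminus S'$ and $V_{C_j}\setminus S'$ survive. But $T''=T'$ is such a configuration. The set you call ``forced'' ($T'\setminus S'$ plus the non-$S'$ parts of all but one side) is also the wrong one: deleting it leaves all surviving non-$S'$ vertices inside a single clique, hence at most one non-trivial component.

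The fallback rests on the principle that an essential cut of $H$ is an essential cut of its spanning subgraph $G^*$, i.e.\ that adding edges cannot lower $\kappa'$. This is false. Take vertices $v,x,w,s,t_1,t_2$ and a clique $K_m$ ($m\ge 2$), with edges $vx,xw,ws,xt_1,xt_2$ and $s,t_1,t_2$ joined to all of $K_m$. Here $T'=\{s,t_1,t_2\}$ is a minimum essential cut ($\kappa'=3$), and $S'=\{v,s,t_1,t_2\}$ is a maximum independent set with $v\in V_{C_1}\cap S'$ and $w\in V_{C_1}\setminus S'$. Yet in $G+vw$ the set $\{x,s\}$ is an essential cut. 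So a non-trivial component of $H-T''$ can be assembled from vertices isolated in $G^*-T''$. Your remark that a surviving non-$S'$ vertex already had a neighbour in $G^*$ does not help, since that neighbour may lie in $T''$.

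The missing idea is to argue inside $H$ rather than compare with $G^*$. In $H$:
every vertex of $T'\setminus S'$ is adjacent to all other vertices;
every vertex of $T'\cap S'$ is adjacent to all of $V_H\setminus S'$;
each $V_{C_i}\setminus S'$ is a clique;
and every edge has an endpoint outside $S'$.

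Suppose $H-T''$ has two non-trivial components $D_1,D_2$. Then $T'\setminus S'\subseteq T''$, since otherwise $H-T''$ is connected. Each $D_j$ contains a non-$S'$ vertex lying in some $V_{C_{i_j}}\setminus S'$, and $i_1\ne i_2$ because these sets are cliques. So every vertex of $T'\cap S'$ must also lie in $T''$, since it would otherwise join $D_1$ and $D_2$. Hence $T'\subseteq T''$ and $|T''|\ge k'$.

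With this paragraph your proof is complete. It is in fact more careful than the paper's, which adds a single edge $vw$ and asserts ``obviously'' that $G^*+vw\in\Gcal^{\alpha}_{n,k'}$. The example above shows that this membership is not automatic for a single edge, whereas for the fully saturated $H$ it follows from the structure just described.
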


We only verify $C_i\cong K_{b_i}\vee a_iK_1$ and the rest can be proved similarly. If there exist vertices $w\in V_{C_i} \setminus S'$  and $v\in V_{C_i} \cap S'$ such that $vw\notin E_G^*$, then $S'$  is still an independent set of $G^*+{vw}$. Obviously, $G^*+{vw}\in\Gcal^{\alpha}_{n,k'}$ and by Lemma \ref{lemma:1}, $\lambda_1(G^*+{vw})<\lambda_1(G^*)$, which contradicts the minimality of $\lambda_1(G^*)$. With similar reasoning, we have $wu\in E_G^*$ for any $w,u\in V_{C_i}\setminus S'$. Thus, $C_i\cong K_{b_i}\vee a_iK_1$. 
\begin{claim}\label{Claim:2}
 $l=2.$
\end{claim}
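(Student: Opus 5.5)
Suppose, for contradiction, that $l\ge 3$. The plan is to merge components by adding edges. This produces a graph $G'$ that still lies in $\Gcal^{\alpha}_{n,k'}$ and has strictly smaller distance spectral radius, by Lemma \ref{lemma:1}, contradicting the minimality of $G^*$. At least two of the components, say $C_1$ and $C_2$, are non-trivial. Any extra component $C_3$ (and likewise $C_4,\dots,C_l$) will be absorbed into $C_2$ or $C_1$. The merge keeps an independent set of size $\alpha$, keeps the independence number from growing, and keeps $T'$ as an essential cut.

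The first step is to choose the edges to add. Claim \ref{claim:1} gives $C_i\cong K_{b_i}\vee a_iK_1$, so any independent set of $G^*$ meets $V_{C_i}$ in a clique-free way: it uses either one vertex of $K_{b_i}$ or a subset of the $a_i$ independent vertices. Since $S'$ is maximum, every component with $a_i=0$ must have $b_i\ge1$; indeed each component contributes at least one vertex to $S'$, since otherwise $S'$ could be enlarged, so $a_i\ge1$ for all $i$. Now add all edges between $V_{C_3}\setminus S'$ and $V_{C_2}$, and all edges between $V_{C_3}\cap S'$ and $V_{C_2}\setminus S'$. If $b_2=0$, then $C_2$ is trivial, which it is not; if $C_2$ is a single edge inside $S'$, that contradicts independence. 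So $b_2\ge1$, and $C_2\cup C_3$ becomes connected. The resulting component is $K_{b_2+b_3}\vee(a_2+a_3)K_1$, and $S'$ is still independent, since no new edge joins two vertices of $S'$. Hence $\alpha(G')\ge\alpha$. Adding edges never increases the independence number, so $\alpha(G')=\alpha$.

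The second step is the connectivity check. In $G'-T'$ the components are $C_1$ and the merged $C_2\cup C_3$ (plus $C_4,\dots$), and at least two are non-trivial, so $\kappa'(G')\le k'$. I also need $\kappa'(G')\ge k'$. Adding edges can only destroy separating sets, never create them. So any set $T''$ whose removal from $G'$ leaves at least two non-trivial components also does so in $G^*$, because the components of $G^*-T''$ refine those of $G'-T''$. Some care is needed here, since refining could in principle produce trivial pieces. To handle this, take two non-trivial components of $G'-T''$; each contains a non-trivial component of $G^*-T''$ or an edge, and the edges of each lie within it. Thus $|T''|\ge k'$ and $\kappa'(G')=k'$. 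Because $G'$ is obtained from $G^*$ by adding at least one edge and $G^*$ is connected, Lemma \ref{lemma:1}, applied with $e$ one of the new edges and iterated, gives $\lambda_1(G')<\lambda_1(G^*)$. This is the required contradiction, so $l=2$.

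The main obstacle is the refinement argument for $\kappa'(G')\ge k'$. A non-trivial component of $G'-T''$ might split in $G^*-T''$ into only trivial pieces. That cannot happen, because the component contains an edge of $G'$; if that edge is a new edge, I instead use that its endpoints lie in $C_2\cup C_3$, which has an old edge inside $C_2$. I would formalize this by noting that $G^*[V_{C_2}]$ is non-trivial and every vertex of $C_3$ is adjacent in $G'$ to a vertex of $K_{b_2}$.
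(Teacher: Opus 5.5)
Your strategy is the same as the paper's. The paper adds only the single edge $w_1w_2$ with $w_1\in V_{C_1}\setminus S'$, $w_2\in V_{C_3}$, and calls membership in $\Gcal^{\alpha}_{n,k'}$ ``easy to check''; your fuller join changes nothing essential. The gap is in your proof of $\kappa'(G')\ge k'$. You assert that every $T''$ leaving two non-trivial components in $G'$ also does so in $G^*$, and this is false. Your patch via ``an old edge inside $C_2$'' does not repair it, because $T''$ may contain the endpoints of every such edge. Concretely, take $x\in V_{C_2}\setminus S'$ and $y\in V_{C_3}$, so that $xy$ is one of your new edges, and set $T''=(T'\cup V_{C_2}\cup V_{C_3})\setminus\{x,y\}$. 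Then $C_1$ and $\{x,y\}$ are non-trivial components of $G'-T''$. But $N_{G^*}(x)\subseteq T'\cup V_{C_2}$ and $N_{G^*}(y)\subseteq T'\cup V_{C_3}$, so $x$ and $y$ are isolated in $G^*-T''$. When $l=3$, $C_1$ is then the only non-trivial component of $G^*-T''$. So a non-trivial component of $G'-T''$ can split into trivial pieces, exactly the case you say cannot happen.

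What is true is only $|T''|\ge k'$, and it needs a different reason. If both non-trivial components of $G'-T''$ contain an edge of $G^*$, they contain distinct non-trivial components of $G^*-T''$, so $|T''|\ge\kappa'(G^*)=k'$. Otherwise some non-trivial component $D$ of $G'-T''$ is independent in $G^*$, so it contains a new edge. Each of your new edges has an endpoint $z$ in $V_{C_2}\setminus S'$ or $V_{C_3}\setminus S'$, and by Claim \ref{claim:1} such a $z$ is adjacent in $G^*$ to every vertex of $T'$. Since $D$ is a component of $G'-T''$ that contains $z$ but no edge of $G^*$, every $G^*$-neighbour of $z$ lies in $T''$. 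Hence $T'\subseteq T''$ and $|T''|\ge k'$. The paper's single-edge version needs exactly this argument with $z=w_1$. Separately, drop your remark that $a_i\ge 1$ for all $i$. It is false: by Claim \ref{claim:1} every vertex of $V_{C_i}\setminus S'$ is adjacent to all of $T'\cap S'$, so when $g\ge1$ a component can miss $S'$ entirely, and the paper handles $a_2=0$ in Claim \ref{claim:3}. You never use the remark, so removing it costs nothing.
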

 If not, $l\ge3$. Without loss of generality, let $C_1$, $C_2$, and $C_3$ be three connected components of $G^*$, two of which, $C_1$ and $C_2$, are nontrivial. 
 Choose $w_1\in V_{C_1} \setminus S'$ and $w_2\in V_{C_3} $. It is easy to check that $G^*+w_1w_2\in\Gcal^{\alpha}_{n,k'}$. By Lemma \ref{lemma:1}, $\lambda_1(G^*+{w_1w_2})<\lambda_1(G^*)$, a contradiction. The claim holds.
 
  By Claim \ref{Claim:2}, we have $G^* -T' = C_1 \cup C_2$. For convenience, write $V_{C_1}\cap S'=\{u_1',u_2',\ldots,u_{a_1}'\}$, $ V_{C_1}\setminus S'=\{u_1,u_2,\ldots,u_{b_1}\}$, $V_{C_2}\cap S'=\{v_1',v_2',\ldots,v_{a_2}'\}$ and $ V_{G_2}\setminus S'=\{v_1,v_2,\ldots,v_{b_2}\}$.
\begin{claim}\label{claim:3}
Either $C_1$ or $C_2$ is isomorphic to $K_2$.
\end{claim}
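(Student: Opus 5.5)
We argue by contradiction: assume that neither $C_1$ nor $C_2$ is $K_2$. Since both are non-trivial, this means $|V_{C_1}|\ge 3$ and $|V_{C_2}|\ge 3$. By Claims \ref{claim:1} and \ref{Claim:2}, $G^*$ is completely described by the parameters $g,h,a_1,b_1,a_2,b_2$. Every vertex of $T'\setminus S'$ is adjacent to all other vertices. The vertices of $S'$ are pairwise nonadjacent. Each $w'_j$ is adjacent to all of $(T'\setminus S')\cup(V_{C_1}\setminus S')\cup(V_{C_2}\setminus S')$. There are no edges between $C_1$ and $C_2$. Hence all distances in $G^*$ are $1$ or $2$, and $D(G^*)=2(J-I)-A(G^*)$. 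The plan is to build a graph $G'\in\Gcal^{\alpha}_{n,k'}$ from $G^*$ with $\lambda_1(G')<\lambda_1(G^*)$, which contradicts minimality.

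The construction moves every vertex of $C_2$ except two into $C_1$. Choose two vertices $x,y$ of $C_2$ such that $C_2[\{x,y\}]\cong K_2$. This is possible because $b_2\ge1$: a nontrivial $K_{b_2}\vee a_2K_1$ must contain a non-$S'$ vertex, so we take $x\in V_{C_2}\setminus S'$ and $y$ any other vertex of $C_2$. Let $R=V_{C_2}\setminus\{x,y\}\ne\emptyset$. Define $G'$ by deleting all edges between $R$ and $\{x,y\}$ and adding all edges between $R$ and $V_{C_1}$, in each case except between two vertices of $S'$. The new components are $C_1'\cong K_{b_1+b_2'}\vee(a_1+a_2')K_1$ and $C_2'\cong K_2$, where $a_2',b_2'$ count $R\cap S'$ and $R\setminus S'$.

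Next we check that $G'\in\Gcal^{\alpha}_{n,k'}$. The set $S'$ is still independent. For $\alpha(G')=\alpha$, note that any independent set uses at most one vertex from each clique part. The same counting as in $G^*$ applies: an independent set meets $C_1'$ either in a single non-$S'$ vertex or only in $S'$, and the same holds for $C_2'$. So $\alpha(G')\le\alpha$. For the connectivity, $T'$ still separates $G'$ into two non-trivial components, so $\kappa'(G')\le k'$. Conversely, any essential cut of $G'$ must contain all of $T'\setminus S'$, since those vertices are universal. Removing fewer than $k'$ vertices in total would leave either the two cliques linked through some $w'_j$, or a cut inside $C_1'\cup T'$ that was already present in $G^*$. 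This equality check needs some care, but it follows the same pattern as the membership checks in Claims \ref{claim:1} and \ref{Claim:2}.

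The main obstacle is comparing $\lambda_1(G')$ with $\lambda_1(G^*)$. Let $X$ be the Perron vector of $D(G^*)$. Only the distances between $R$ and $V_{C_1}\cup\{x,y\}$ change: pairs in $R\times V_{C_1}$ drop from $2$ to $1$, and pairs in $R\times\{x,y\}$ rise from $1$ to $2$ (S'-pairs stay at $2$). Therefore
\[
X^T(D(G')-D(G^*))X \le 2\sum_{r\in R\setminus S'}x_r\Bigl(\sum_{z\in\{x,y\}}x_z-\sum_{u\in V_{C_1}\setminus S'}x_u\Bigr)+(\text{analogous } S' \text{ terms}).
\]
To get the sign we use symmetry together with the eigen-equations. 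All vertices of the same type within a component share a Perron entry. Subtracting the equations $\lambda x_u=\sum_v d_{uv}x_v$ for vertices of the same type in different components expresses the differences of the entries in terms of the component sizes. First choose the labelling so that $|V_{C_1}|\ge|V_{C_2}|$, which gives $\sum_{u\in V_{C_1}\setminus S'}x_u\ge x_x+x_y$ when $b_1\ge 2$. The small cases ($b_1=1$, or $S'$-heavy components) have to be handled separately, either by choosing $x,y$ suitably or by computing an equitable quotient matrix via Lemma \ref{lemma:2} and comparing characteristic polynomials. If the Rayleigh quotient comparison is not strict, a strict inequality follows from Lemma \ref{lemma:1} or from positivity of $X$. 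This yields the contradiction, so one of $C_1$, $C_2$ is $K_2$.
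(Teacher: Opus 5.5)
\textbf{Main gap: the eigenvalue comparison runs in the wrong direction.} You take $X$ to be the Perron vector of $D(G^*)$ and aim to show $X^T(D(G')-D(G^*))X<0$. That only gives $X^TD(G')X<\lambda_1(G^*)$. Since $X^TD(G')X\le\lambda_1(G')$ by the Rayleigh principle, it gives no upper bound on $\lambda_1(G')$, and neither Lemma~\ref{lemma:1} nor positivity of $X$ repairs this. You must use the Perron vector $Y$ of the \emph{new} graph: $\lambda_1(G^*)\ge Y^TD(G^*)Y=\lambda_1(G')+Y^T(D(G^*)-D(G'))Y$, and then prove $Y^T(D(G^*)-D(G'))Y>0$. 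This is what the paper does with the Perron vectors of $G_2$ and $G_3$. It is also what makes the sign computable: in the new graph the merged component is $K_b\vee aK_1$ and the other is $K_2$, so there are only a few entry classes. The eigen-equations then yield usable relations such as $2x_1>x_2$, $2x_1>x_2'$ and $(a_1+a_2)x_1'+(b_1+b_2)x_1>2(x_2+x_2')$.

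Even with the right vector, the paper still needs a case split on $(a_1,a_2)$ and on $h$, and a different construction ($G_1$ instead of $G_3$) when $a_1+b_1=3$. In Case 1 it also collapses either component (both results are isomorphic to $G_2$) and plays the two resulting lower bounds against each other. Your proposal asserts $\sum_{u\in V_{C_1}\setminus S'}x_u\ge x_x+x_y$ without proof and defers all remaining cases. So the actual content of the claim is not supplied.

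\textbf{Two further statements are wrong.}

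First, distances in $G^*$ are not all $1$ or $2$. If $h=0$ (i.e.\ $T'\subseteq S'$) there is no universal vertex. Then a non-$S'$ vertex of $C_1$ and an $S'$ vertex of $C_2$ are at distance $3$, and two $S'$ vertices in different components are at distance $4$. Your account of which distance entries change, and by how much, therefore fails in that case. This is why the paper treats $h=0$ separately in Subcases~1.2 and~2.2.

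Second, $y$ cannot be an arbitrary second vertex of $C_2$. Suppose $g=0$, which forces $a_1,a_2\ge 1$, and $y\notin S'$. Then all of $V_{C_2}\cap S'$ moves into $C_1'$, and $(V_{C_1'}\cap S')\cup\{y\}$ is an independent set of size $a_1+a_2+1=\alpha+1$, so $G'\notin\Gcal^{\alpha}_{n,k'}$. Your counting argument for $\alpha(G')\le\alpha$ misses this. When $a_2\ge 1$ you must keep one vertex of $S'$ in the new $K_2$, as the paper does with $u_1,u_1'$.
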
 
Otherwise, $|V_{C_i}|=a_i+b_i\ge 3$ for $1\leq i\leq 2$. Clearly, $b_i\ge 1$ for $1\leq i\leq 2$. 
Based on whether either of the two connected components contains a vertex from the independent set, we split the proof into three cases.

\noindent\textbf{Case 1.} $a_1 \ge 1$ and $a_2 \ge 1$.

Let $G_1=G^*-\{u_1v\mid v\in V_{C_1}\setminus\{u_1,u_1'\}\}-\{u_1'v\mid v\in V_{C_1}\setminus\{u_1,u_1'\}\}
+\{u_iv\mid 2 \le i \le b_1,\ v\in V_{C_2}\}+\{u_i'v\mid 2 \le i \le a_1,\ v\in V_{C_2}\setminus\ S'\}$ and $G_1'=G^*-\{v_1v\mid v\in V_{C_2}\setminus\{v_1,v_1'\}\}-\{v_1'v\mid v\in V_{C_2}\setminus\{v_1,v_1'\}\}+\{v_iv\mid 2 \le i \le b_2,\ v\in V_{C_1}\}+\{v_i'v\mid 2 \le i \le a_2,\ v\in V_{C_1}\setminus\ S'\}$. Clearly, $G_1\cong G_1' \cong G_2$, where 
$G_2-T'=C_1'\cup C_2'$, $C_1'\cong K_{b_1+b_2-1}\vee (a_1+a_2-1)K_1$ and $C_2'\cong K_2$. Let $V(C_2')=\{u,w\}$, where $w\in S'$. Let $X$ be the Perron vector of $D(G_2)$, with \(x_v\) denoting the entry of $X$ corresponding to vertex $v\in V(G_2)$. By symmetry, set $x_v=x_0$ for $v \in T'\setminus\ S'$, $x_v=x_0'$ for $v \in T'\cap S'$, $x_v=x_1$ for $v \in V_{C_1'}\setminus\ S'$ , $x_v=x_1'$ for $v \in V_{C_1'}\cap S'$, $x_u=x_2$ and $x_w=x_2'$. Next, we consider two subcases according to the value of $h$.

\noindent\textbf{Subcase 1.1.} $h \ge 1$.

From $D(G_2)X=\lambda_1(G_2)X$, we obtain
\begin{align*}
\lambda_1(G_2) x_1' &= 2gx_0'+ hx_0+(b_1+b_2-1)x_1+2(a_1+a_2-2)x_1'+2x_2+2x_2', \\
\lambda_1(G_2) x_1 &= gx_0'+ hx_0+(b_1+b_2-2)x_1+(a_1+a_2-1)x_1'+2x_2+2x_2', \tag{2}\label{eq:2}\\
\lambda_1(G_2)x_2'  &= 2gx_0'+ hx_0+2(b_1+b_2-1)x_1+2(a_1+a_2-1)x_1'+x_2, \tag{3}\label{eq:3}\\
\lambda_1(G_2)x_2  &= gx_0'+ hx_0+2(b_1+b_2-1)x_1+2(a_1+a_2-1)x_1'+x_2', \tag{4}\label{eq:4}\\
\end{align*}
which yield
\begin{align*}
2x_1-x_2'&=\frac{hx_0+3x_2'+3x_2}{\lambda_1(G_2)+1}>0,\tag{5}\label{eq:5}\\
2x_1-x_2&=\frac{gx_0'+hx_0+3x_2'+3x_2}{\lambda_1(G_2)+1}>0,\tag{6}\label{eq:6}
\end{align*}
and
\begin{align*}
&(\lambda_1(G_2)+4)((a_1+a_2)x_1'+(b_1+b_2)x_1-2(x_2+x_2'))\\
&= g(2a_1+2a_2+b_1+b_2-6)x_0'+ h(a_1+a_2+b_1+b_2-4)x_0\\
&+(2a_1+2a_2+b_1+b_2-6)(a_1+a_2-1)x_1'+2x_1'+3x_1\\
& +(b_1+b_2-1)(a_1+a_2+b_1+b_2-5)x_1+(2a_1+2a_2+2b_1+2b_2-10)(x_2'+x_2).
\end{align*}
Recall that $b_i \ge 1$, $a_i \ge 1$, $b_1+b_2=n-\alpha-k'+g \ge 3$ and $a_i+b_i\ge 3$ for $1\leq i\leq 2$. Combining these with $g\ge 0$, we can deduce that $(a_1+a_2)x_1'+(b_1+b_2)x_1-2(x_2+x_2') > 0$. It follows that
\begin{align*}
\frac{1}{2}(\lambda_1(G^*) -\lambda_1(G_2))& \ge \frac{1}{2} X^T (D(G^*) - D(G_2)) X \\ 
&=\frac{1}{2} X^T (D(G^*) - D(G_1)) X\\
&=(a_1-1)(b_2x_1-x_2)x_1'+(b_1-1)(a_2x_1'+b_2x_1-x_2-x_2')x_1\\
&=\frac{1}{2} X^T (D(G^*) - D(G_1')) X\\
&=(a_2-1)(b_1x_1-x_2)x_1'+(b_2-1)(a_1x_1'+b_1x_1-x_2-x_2')x_1.
\end{align*}
Assume that $b_i\ge 2$ for $i=1,2$. Then $b_ix_1-x_2>0$ for $i=1,2$. If $(a_1-1)(b_2x_1-x_2)x_1'+(b_1-1)(a_2x_1'+b_2x_1-x_2-x_2')x_1=(a_2-1)(b_1x_1-x_2)x_1'+(b_2-1)(a_1x_1'+b_1x_1-x_2-x_2')x_1\le 0$ , then $(a_1+a_2)x_1'+(b_1+b_2)x_1-2(x_2+x_2') \le 0$, a contradiction. In the case $b_1=1$, we have $a_1\ge 2$ and $b_2 \ge 2$, which implies $(a_1-1)(b_2x_1-x_2)x_1'+(b_1-1)(a_2x_1'+b_2x_1-x_2-x_2')x_1=(a_1-1)(b_2x_1-x_2)x_1'>0$, a contradiction. A similar contradiction can be obtained when $b_2=1$. Therefore, $\frac{1}{2}(\lambda_1(G^*) -\lambda_1(G_2))>0$ and $\lambda_1(G_2) <\lambda_1(G^*)$, a  contradiction.

\noindent\textbf{Subcase 1.2.} $h = 0$.

Then $g=k'-h \ge 1$. By $D(G_2)X=\lambda_1(G_2)X$, we have
\begin{align*}
\lambda_1(G_2) x_1' &= 2gx_0'+(b_1+b_2-1)x_1+2(a_1+a_2-2)x_1'+3x_2+4x_2',\\
\lambda_1(G_2) x_1 &= gx_0'+(b_1+b_2-2)x_1+(a_1+a_2-1)x_1'+2x_2+3x_2',\\
\lambda_1(G_2)x_2'  &= 2gx_0'+3(b_1+b_2-1)x_1+4(a_1+a_2-1)x_1'+x_2,\\
\lambda_1(G_2)x_2  &= gx_0'+2(b_1+b_2-1)x_1+3(a_1+a_2-1)x_1'+x_2'.
\end{align*}
Thus,
\begin{align*}
x_1'-x_1=\frac{gx_0'+(a_1+a_2-2)x_1'+x_2+x_2'}{\lambda_1(G_2)+1}>0
\end{align*}
and
\begin{align*}
&(\lambda_1(G_2)+2)((a_1+a_2-2)x_1'+(b_1+b_2-1)x_1-x_2)\\
&=g(2a_1+2a_2+b_1+b_2-6)x_0'+(a_1+a_2-1)(2a_1+2a_2+b_1+b_2-8)x_1'\\
&+(b_1+b_2-1)(a_1+a_2+b_1+b_2-4)x_1+(4a_1+4a_2+3b_1+3b_2-12)x_2'\\
&+(3a_1+3a_2+2b_1+2b_2-10)x_2>0\quad (\text{since } \ a_i+b_i\ge 3 \  \text{for } \ 1\leq i\leq 2).\\
\end{align*}
It indicates that $(a_1+a_2-2)x_1'+(b_1+b_2-1)x_1-x_2>0$, and hence
\begin{align*}
x_2'-x_1'=\frac{2(a_1+a_2-2)x_1'+2(b_1+b_2-1)x_1-2x_2}{\lambda_1(G_2)+4}>0.
\end{align*}
For convenience, set $a_1+a_2-1=A,$ and $b_1+b_2-1=B$. Then
\begin{align*}
&(\lambda_1(G_2)+6)((a_1+a_2+2)x_1'+(b_1+b_2)x_1-2(x_2+2x_2'))\\
&=(\lambda_1(G_2)+6)((A+3)x_1'+(B+1)x_1-2(2x_2'+x_2))\\
&=g(2A+B-3)x_0'+(2A^2+AB-11A+12)x_1'+(AB+B^2-7B+5)x_1\\
&+(4A+3B-11)x_2'+(3A+2B-5)x_2. \tag{7}\label{eq:7}
\end{align*}
 Note that $A\ge 1$, $B\ge n-\alpha-k'+g \ge 3$ and $A+B \ge 4$. Clearly, $2A+B-3>0$, $4A+3B-11>0$ and $3A+2B-5>0$. In addition, $2A^2+AB-11A+12>0$ for $A=1,2,3$. When $A\ge 4$, we obtain $2A^2+AB-11A+12>2(A-2)(A-3)>0.$
 It is easy to see that $AB+B^2-7B+5 \ge 0$ whenever $A+B\ge 6$. For $4 \le A+B \le 5$, all pairs $(A,B)$ satisfying $AB+B^2-7B+5<0$ are listed as $(1,4),\;(2,3)$ and $(1,3)$.
Combined this with $x_1'>x_1$, we obtain $\eqref{eq:7} >0$ by successive calculations in $\eqref{eq:7}$.
 From the preceding discussion, it follows that 
 $(a_1+a_2+2)x_1'+(b_1+b_2)x_1-2(x_2+2x_2')>0.$ Without loss of generality, assume that $a_1x_1'+ b_1x_1 \ge  a_2x_1'+ b_2x_1$.Thus, $a_1x_1'+ b_1x_1>x_2+2x_2'-x_1'$. Furthermore, $2a_1x_1'+ b_1x_1 \ge x_1'+a_1x_1'+ b_1x_1>x_2+2x_2'$. Since $x_2'>x_1'$, we obtain $(a_1+a_2)x_1'+(b_1+b_2)x_1>2(x_2+x_2')$. Then, $a_1x_1'+ b_1x_1 > x_2+x_2'$. Moreover,
\begin{align*}
\frac{1}{2}(\lambda_1(G^*) -\lambda_1(G_2))& \ge \frac{1}{2} X^T (D(G^*) - D(G_2)) X \\ 
&=2(a_1-1)x_1'(a_2x_1'+b_2x_1-x_2-x_2')\\
&+(b_1-1)x_1(2a_2x_1'+b_2x_1-2x_2'-x_2)\\
&=2(a_2-1)x_1'(a_1x_1'+b_1x_1-x_2-x_2')\\
&+(b_2-1)x_1(2a_1x_1'+b_1x_1-2x_2'-x_2)\\
&>0.
\end{align*}
Thus, $\lambda_1(G_2) <\lambda_1(G^*)$, a  contradiction.

\noindent\textbf{Case 2.} $\min\{a_1,a_2\}=0$ and $a_1+a_2 \ge 1$.

 Without loss of generality, assume that $a_1 > a_2=0$. Then, $b_2 \ge 3$  and $C_2 \cong K_{b_2}$. we assert that $g \ge 1$.  If instead $g=0$, then we deduce $\alpha(G^*)> \alpha$, a contradiction. Subsequently, we also distinguish two subcases according to the value of $h$.

\noindent\textbf{Subcase2.1.} $h \ge 1$.

Note that $a_1+b_1\ge 3$. We begin by considering $a_1+b_1\ge 4$. Let $G_3=G^*-\{v_1v\mid v\in V_{C_2}\setminus\{v_1,v_2\}\}-\{v_2v\mid v\in V_{C_2}\setminus\{v_1,v_2\}\}+\{v_iv\mid 3 \le i \le b_2,\ v\in V_{C_1}\}.$ Let $Y$ denote the Perron vector of $D(G_3)$, with \(y_v\) denoting the entry of $Y$ corresponding to vertex $v\in V(G_3)$. 
By symmetry, set $y_v=y_0$ for $v \in T'\setminus\ S'$, $y_v=y_0'$ for $v \in T'\cap S'$, $y_v=y_1$ for $v \in (V_{C_1}\setminus\ S')\cup (V_{C_2} \setminus \{v_1,v_2\})$, $y_v=y_1'$ for $v \in V_{C_1}\cap S'$, and $y_{v_1}=y_{v_2}=y_2$. Then
\begin{align*}
\lambda_1(G_3) y_1 &= gy_0'+hy_0+(b_1+b_2-3)y_1+a_1y_1'+4y_2,\tag{8}\label{eq:8}\\
\lambda_1(G_3)y_1' &= 2gy_0'+hy_0+(b_1+b_2-2)y_1+2(a_1-1)y_1'+4y_2,\\
\lambda_1(G_3)y_2  &= gy_0'+hy_0+2(b_1+b_2-2)y_1+2a_1y_1'+y_2,\tag{9}\label{eq:9}
\end{align*}
which indicates that 
\[y_1'-y_1=\frac{gy_0'+(a_1-1)y_1'}{\lambda_1(G_3)+1}>0\]
and 
\[2y_1-y_2=\frac{gy_0'+hy_0+6y_2}{\lambda_1(G_3)+1}>0.\]
Combining these with  $a_1+b_1 \ge 4$ and $b_2 \ge 3$, we can deduce that
\begin{align*}
\frac{1}{2}(\lambda_1(G^*) -\lambda_1(G_3))& \ge \frac{1}{2} Y^T (D(G^*) - D(G_3))Y \\ 
&=(b_2-2)(a_1y_1'+b_1y_1-2y_2)y_1\\
&>2(2y_1-y_2)y_1\quad(\text{since} \ y_1'>y_1)\\
&>0,
\end{align*}
and hence, $\lambda_1(G_3)< \lambda_1(G^*)$ , a contradiction. Next, we treat the case $a_1+b_1= 3$. The only admissible ordered pairs are $(a_1,b_1)=(2,1)$ and $(1,2)$.
If $(a_1,b_1)=(2,1)$, then
\begin{align*}
\frac{1}{2}(\lambda_1(G^*) -\lambda_1(G_1))& \ge \frac{1}{2} X^T (D(G^*) - D(G_1)) X \\ 
&=(b_2x_1-x_2)x_1'\\
&>0
\end{align*}
and $\lambda_1(G_1)< \lambda_1(G^*)$, a contradiction. When $(a_1,b_1)=(1,2)$ , by \eqref{eq:2} -- \eqref{eq:4} , it follows that
\[b_2x_1-x_2-x_2'=\frac{hx_0+((b_2-3)\lambda_1(G_1)+4b_2-4)x_1}{\lambda_1(G_1)+5}>0.\]
Combining this with \eqref{eq:5} and \eqref{eq:6}, we can get
\begin{align*}
\frac{1}{2}(\lambda_1(G^*) -\lambda_1(G_1))& \ge \frac{1}{2}X^T (D(G^*) - D(G_1))X \\ 
&=(b_2x_1-x_2-x_2')x_1\\
&>0 .
\end{align*}
Thus, $\lambda_1(G_1)< \lambda_1(G^*)$, a contradiction. 

\noindent\textbf{Subcase 2.2.} $h = 0$.

By $D(G_3)Y=\lambda_1(G_3)Y$, we have
\begin{align*}
\lambda_1(G_3) y_1 &= gy_0'+(b_1+b_2-3)y_1+a_1y_1'+4y_2,\\
\lambda_1(G_3)y_1' &= 2gy_0'+(b_1+b_2-2)y_1+2(a_1-1)y_1'+6y_2,\\
\lambda_1(G_3)y_2  &= gy_0'+2(b_1+b_2-2)y_1+3a_1y_1'+y_2.
\end{align*}
And hence,
\[y_1'-y_1=\frac{gy_0'+(a_1-1)y_1'+2y_2}{\lambda_1(G_3)+1}>0\]
and
\[y_1'+y_1-y_2=\frac{2gy_0'+7y_1'+8y_1}{\lambda_1(G_3)+9}>0.\]
It follows that
\begin{align*}
\frac{1}{2}(\lambda_1(G^*) -\lambda_1(G_3))& \ge \frac{1}{2} Y^T (D(G^*) - D(G_3))Y \\ 
&=(b_2-2)(2a_1y_1'+b_1y_1-2y_2)y_1\\
&>0\quad(\text{since} \ a_1 \ge 1,\ b_1 \ge 1 \text{ and } a_1+b_1\ge 3),
\end{align*}
and $\lambda_1(G_3)< \lambda_1(G^*)$, a contradiction. 

\noindent\textbf{Case 3.} $a_1 = a_2 = 0$.

In this case, $g=\alpha \ge 2$ and $b_i\ge 3$ for $i=1,2$. Without loss of generality, assume $b_1 \ge b_2$.  From \eqref{eq:8} and \eqref{eq:9}, we obtain 
\[(b_1+b_2)y_1-4y_2=\frac{(b_1+b_2-4)(gy_0'+hy_0+2(b_1+b_2-2)y_1)}{\lambda_1(G_3)+b_1+b_2-1}>0.\]
Thus,
\begin{align*}
\frac{1}{2}(\lambda_1(G^*) -\lambda_1(G_3))& \ge \frac{1}{2} Y^T (D(G^*) - D(G_3))Y \\ 
&=(b_2-2)(b_1y_1-2y_2)y_1\\
&>0\quad(\text{since}\ 2b_1y_1 \ge (b_1+b_2)y_1 > 4y_2 ).
\end{align*}
Then $\lambda_1(G_3)< \lambda_1(G^*)$, a contradiction. 

Consequently, there exists some $i\in\{1,2\}$ such that $C_i\cong K_2$, which completes the proof of the Claim \ref{claim:3}.
Without loss of generality, we may assume that $C_2\cong K_2$ and $V(C_2)=\{u',v'\}$. Recall that $T'\cap S'=\{w_1',w_2',\ldots,w_g'\}$,
$V_{C_1}\cap S'=\{u_1',u_2',\ldots,u_{a_1}'\}$ and $ V_{C_1}\setminus S'=\{u_1,u_2,\ldots,u_{b_1}\}$. We now establish the following claim.

\begin{claim}\label{claim:4}
$S'\cap T'=\emptyset.$
\end{claim}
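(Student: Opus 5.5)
The plan is to argue by contradiction with a switching operation. Suppose $g=|S'\cap T'|\ge 1$, and pick $w_1'\in T'\cap S'$. By Claim \ref{claim:1} and Claim \ref{Claim:2} the structure of $G^*$ is known. $G^*[V_{C_1}\cup T']\cong K_{h+b_1}\vee(g+a_1)K_1$, $G^*[V_{C_2}\cup T']$ has the analogous form, and $C_2\cong K_2$ with $V(C_2)=\{u',v'\}$.

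First we check that $b_1\ge 1$. Counting non-$S'$ vertices gives $h+b_1+b_2=n-\alpha\ge k'+3$. Since $h=k'-g$ and $b_2\le 2$, this yields $b_1\ge g+1\ge 2$. So we may pick $u_1\in V_{C_1}\setminus S'$.

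Define $G'=G^*-\{w_1'u',w_1'v'\}+\{u_1u',u_1v'\}$; that is, we swap the roles of $w_1'$ and $u_1$. Put $T''=(T'\setminus\{w_1'\})\cup\{u_1\}$. Then $G'-T''$ has the two nontrivial components $C_1'=C_1-u_1+w_1'$ and $C_2$. The set $S'$ is still independent in $G'$. Moreover, $G'$ has exactly the form of Claim \ref{claim:1} with parameters $(g-1,h+1,a_1+1,b_1-1)$, so $\alpha(G')=\alpha$ follows from the same counting as for $G^*$.

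The step I expect to be the main obstacle is verifying $\kappa'(G')=k'$. The inequality $\kappa'(G')\le k'$ is witnessed by $T''$. For the reverse, I would argue as follows. The $h+1$ vertices of $T''\setminus S'$ are adjacent to all other vertices, so they lie in every separating set. After deleting them, the graph is $G^*-(T'\setminus S')$ with $w_1'$ moved to the $C_1$ side. A set producing two nontrivial components must then contain the remaining $g-1$ independent vertices of $T''$, or else it must cut a clique part of $C_1'$ or $C_2$. This should give at least as many vertices as in $G^*$, and so $\kappa'(G')\ge k'$. If this direct count becomes messy, I would instead compare with the corresponding separators of $G^*$ one by one.

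Next we compare distances. In $G^*$ we have $d(w_1',u')=d(w_1',v')=1$ and $d(u_1,u')=d(u_1,v')=2$. In $G'$ these values are exchanged: $w_1'$ now reaches $C_2$ through $u_1$ at distance $2$, even when $h=0$. All other distances are unchanged, because $u_1$ and $w_1'$ keep their adjacencies to $T'\setminus\{w_1'\}$ and to $C_1-u_1$. Let $X$ be the Perron vector of $D(G')$. Then
\[
\lambda_1(G^*)-\lambda_1(G')\ge X^T(D(G^*)-D(G'))X=2(x_{u_1}-x_{w_1'})(x_{u'}+x_{v'}).
\]

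It remains to show $x_{u_1}>x_{w_1'}$. In $G'$, $u_1$ is a dominating vertex, so $d_{G'}(u_1,z)=1\le d_{G'}(w_1',z)$ for every $z\ne u_1,w_1'$, with strict inequality for the vertices $u',v'$ and for the other vertices of $S'$. Subtracting the eigen-equations at $u_1$ and $w_1'$ gives
\[
(\lambda_1(G')+1)(x_{u_1}-x_{w_1'})=\sum_{z}\bigl(d(w_1',z)-d(u_1,z)\bigr)x_z>0.
\]
Hence $\lambda_1(G')<\lambda_1(G^*)$ with $G'\in\Gcal^\alpha_{n,k'}$, contradicting the minimality of $G^*$. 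Therefore $g=0$.
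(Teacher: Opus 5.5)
\textbf{There is a genuine gap.} The step you flagged, $\kappa'(G')=k'$, is not the problem. The problem is the Perron-vector comparison, whose sign is reversed. In $G'$ the vertex $u_1$ is dominating, so it has the smallest transmission and hence the \emph{smallest} Perron entry. Subtracting the eigen-equations correctly gives $(\lambda_1(G')+1)(x_{u_1}-x_{w_1'})=\sum_{z\ne u_1,w_1'}(d_{G'}(u_1,z)-d_{G'}(w_1',z))x_z<0$. So in exactly the situation where your distance bookkeeping is right ($u',v'\notin S'$ and $h\ge 1$), you get $X^T(D(G^*)-D(G'))X=2(x_{u_1}-x_{w_1'})(x_{u'}+x_{v'})<0$. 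The inequality $\lambda_1(G^*)-\lambda_1(G')\ge X^T(D(G^*)-D(G'))X$ then gives nothing.

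\textbf{This is not a fixable slip: in that situation your switch goes the wrong way.} Suppose $S'\cap V(C_2)=\emptyset$ and $h\ge 1$. Then $G'$ has $h+1\ge 2$ cut vertices outside $S'$. Applying the paper's $G_5$-move to $G'$ (put $w_1'\in V_{C_1'}\cap S'$ back into the cut and $u_1$ back into $C_1$) returns $G^*$, so the paper's Case 1 computation yields $\lambda_1(G^*)<\lambda_1(G')$ whenever $g\ge 2$. For example, with $k'=3$, $\alpha=2$, $n=8$, $g=2$, $h=1$, $a_1=0$, $b_1=3$, one finds $\lambda_1(G^*)\approx 8.853$, while $\lambda_1(G')>8.89$. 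If $g=1$, then $w_1'$ was the only vertex of $S'$ adjacent to $u'$. Hence $S'\cup\{u'\}$ is independent in $G'$, so $\alpha(G')=\alpha+1$ and $G'\notin\Gcal^\alpha_{n,k'}$; your claim that $\alpha(G')=\alpha$ ``by the same counting'' fails there.

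\textbf{Further omissions.} You did not separate the case $S'\cap V(C_2)\neq\emptyset$. There $w_1'u'\notin E(G^*)$, so only the entry $(w_1',v')$ increases. What is needed is $x_{u_1}(x_{u'}+x_{v'})>x_{w_1'}x_{v'}$, which follows from $2x_{u_1}>x_{w_1'}$ and $x_{u'}\ge x_{v'}$; this is essentially the paper's $G_4$ argument. In addition, for $h=0$ many more distances drop than the four you list.

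\textbf{How the paper handles it.} The paper splits accordingly.
\begin{enumerate}
\item When $S'\cap V(C_2)\neq\emptyset$, it uses $G_4$.
\item When $S'\cap V(C_2)=\emptyset$, $h\ge 2$ and $a_1\ge 1$, it uses the \emph{opposite} swap $G_5$, comparing via the Perron vectors of both graphs.
\item It uses your swap (as $G_6$) only when $h=0$, $a_1\ge 1$, $g\ge 2$. There the distances from $u',v'$ to $V_{C_1}\cap S'$ drop from $3$ to $2$ and supply the positive term $2z_2(z_0+(a_1-1)z_1')$.
\item When $a_1=0$ or $h=1$, it makes explicit quotient-matrix polynomial comparisons with $\widetilde H$.
\end{enumerate}
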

Otherwise, suppose that $S'\cap T'\neq \emptyset$ and $g \ge 1$.  When $S'\cap V(C_2) \neq \emptyset$, we have $b_1=n-k'-\alpha-1+g \ge 2+g$  . Suppose $S'\cap V(C_2)=\{u'\}$ and let $G_4=G^*+\{u_iv'\mid 1 \le i \le g \}+\{u_iu'\mid 1 \le i \le g \}-\{w_i'v'\mid 1 \le i\le g\}.$ It is routine to check that $G_4\in\Gcal^\alpha_{n,k'}$. Let $Z$ denote the Perron vector of $D(G_4)$, with \(z_v\) denoting the entry of $Z$ corresponding to vertex $v\in V(G_4)$. By symmetry, set $z_v=z_0$ for $v \in (T'\setminus S')\cup\{u_1,u_2,\ldots,u_g\}$, $z_v=z_1$ for $v \in \{u_{g+1},u_{g+2},\ldots,u_{b_1}\}$, $z_v=z_1'$ for $v \in V_{C_1}\cap S'\cup\{w_1',w_2',\ldots,w_g'\}$ and $z_{u'}=z_{v'}=z_2.$  Then, by $D(G_4)Z=\lambda_1(G_4)Z$, we get
\begin{align*}
\lambda_1(G_4)z_0 &= 2z_2+(k'-1)z_0+(n-k'-\alpha-1)z_1+(\alpha-1)z_1',\\
\lambda_1(G_4)z_1' &= 4z_2+k'z_0+(n-k'-\alpha-1)z_1+2(\alpha-2)z_1'.
\end{align*}
Hence,
\[2z_0-z_1'=\frac{k'z_0+(n-k'-\alpha-1)z_1+z_1'}{\lambda_1(G_4)+1}>0.\]
If $h\ge 1$, then 
\begin{align*}
\frac{1}{2}(\lambda_1(G^*) -\lambda_1(G_4))& \ge \frac{1}{2} Z^T (D(G^*) - D(G_4))Z \\ 
&=gz_2(2z_0-z_1')\\
&>0
\end{align*}
and $\lambda_1(G_4)< \lambda_1(G^*)$, a contradiction; 
while if $h=0$, then $k'=g$,
\begin{align*}
\frac{1}{2}(\lambda_1(G^*) -\lambda_1(G_4))& \ge \frac{1}{2} Z^T (D(G^*) - D(G_4)) Z \\ 
&=z_2(k'(3z_0-z_1')+(n-k'-\alpha-1)z_1+3(\alpha-k'-1)z_1')\\
&>0\quad(\text{since}\ n\ge k'+\alpha+3, \ 3z_0>2z_0>z_1' \text{ and } \alpha \ge k'+1 ).
\end{align*} 
and $\lambda_1(G_4)< \lambda_1(G^*)$, a contradiction. 
When $S'\cap V(C_2)=\emptyset$, we consider the following four possible cases.

\noindent\textbf{Case 1.} $h \ge 2$ and $a_1 \ge 1$.

Recall that $G^* -T' = C_1 \cup C_2$ and $V(C_2)=\{u',v'\}$ and let $G_5=G^*+\{u_1'u',u_1'v'\}-\{w_1u',w_1v'\}$, where $u_1'\in V_{C_1}\cap S'$ and $w_1 \in T'\setminus S'$. Clearly, $G_5 \in \Gcal^\alpha_{n,k'}.$ Let $X$ and $Y$ be the Perron vectors of $D(G^{*})$ and $D(G_{5})$, respectively, where $x_{v}$ ($v\in V(G^{*})$) and $y_{v}$ ($v\in V(G_{5})$) denote the entries of $X$ and $Y$, respectively. By symmetry, set $x_v=x_0$ for $v \in T'\setminus\ S'$, $x_v=x_0'$ for $v \in T'\cap S'$, $x_v=x_1'$ for $v \in V_{C_1}\cap S'$, $x_v=x_1$ for $v \in V_{C_1}\setminus\ S'$ and $x_{u'}=x_{v'}=x_2.$ And let $y_v=y_0$ for $v \in \bigl(T'\setminus S'\bigr)\setminus\{w_1\}$, $y_v=y_0'$ for $v \in \bigl(T'\cap S'\bigr)\cup \{u_1'\}$, $y_v=y_1'$ for $v \in \bigl(V_{C_1}\cap S'\bigr)\setminus \{u_1'\}$, $y_v=y_1$ for $v \in \bigl(V_{C_1}\setminus\ S'\bigr)\cup\{w_1\}$ and $y_{u'}=y_{v'}=y_2.$ Then,
\begin{align*}
\lambda_1(G^*)x_1' &= 2gx_0'+hx_0+2(a_1-1)x_1'+b_1x_1+4x_2,\\
\lambda_1(G^*)x_0 &= gx_0'+(h-1)x_0+a_1x_1'+b_1x_1+2x_2,\\
\lambda_1(G_5)y_0' &= 2gy_0'+(h-1)y_0+2(a_1-1)y_1'+(b_1+1)y_1+2y_2,\\
\lambda_1(G_5)y_1 &= (g+1)y_0'+(h-1)y_0+(a_1-1)y_1'+b_1y_1+4y_2,
\end{align*}
from which we get 
\[x_1'-x_0=\frac{gx_0'+(a_1-1)x_1'+2x_2}{\lambda_1(G^*)+1}\]
and
\[y_1-y_0'=\frac{2y_2-gy_0'-(a_1-1)y_1'}{\lambda_1(G_5)+1}.\]
 If $\lambda_1(G^*) \le \lambda_1(G_5)$,  it follows that 
 $y_2(x_1'-x_0) > \frac{2x_2y_2}{\lambda_1(G_5)+1}> x_2(y_1-y_0')$ and
 \begin{align*}
 (\lambda_1(G^*)-\lambda_1(G_5))X^T Y &= X^T (D(G^*) - D(G_5)) Y \\
 &=2(y_2(x_1'-x_0)-x_2(y_1-y_0')) \\
 &>0,
 \end{align*} 
 a contradiction. Then $\lambda_1(G_5) < \lambda_1(G^*)$, which is still a contradiction. 
   
\noindent\textbf{Case 2.} $h = 0$ and $a_1 \ge 1$.

If $g = 1$, set $S'\cap T'= \{w_1'\}$ and $G_6'= G^*+\{w_1'u \mid u\in V_{C_1}\cap S'\}$. Thus, $G_6'\in \Gcal^\alpha_{n,k'}$. And by Lemma \ref{lemma:1}, $\lambda_1(G_6')<\lambda_1(G^*)$, a contradiction. If $g \ge 2$, let $G_6=G^*+\{u_1u',u_1v'\}-\{w_1'u',w_1'v'\}$, where $u_1\in V_{C_1}\setminus S'$ and $w_1' \in T'\cap S'$. Clearly, $G_6\in \Gcal^\alpha_{n,k'}$.
 Let $Z$ denote the Perron vector of $D(G_6)$, with \(z_v\) denoting the entry of $Z$ corresponding to vertex $v\in V(G_6)$.  By symmetry, set $z_{u_1}=z_0$ , $z_v=z_1'$ for $v \in \bigl(V_{C_1}\cap S'\bigr)\cup\{w_1'\}$, and $z_{u'}=z_{v'}=z_2$ . Then,
\begin{align*}
\frac{1}{2}(\lambda_1(G^*) -\lambda_1(G_6))& \ge \frac{1}{2} Z^T (D(G^*) - D(G_6)) Z \\ 
&=2z_2(z_0+(a_1-1)z_1')\\
&>0
\end{align*}
and $\lambda_1(G_6) < \lambda_1(G^*)$, a contradiction.

 Let $\widetilde{H}\cong K_{k'} \vee ((K_{n-k'-\alpha-1} \vee (\alpha-1)K_1) \cup K_2)$, where $n-k'-\alpha-1 \ge 2$, $\alpha\ge2$, $V(K_{k'})=\{u_1,u_2,\ldots,u_{k'}\}=V_1$, $V(K_{n-k'-\alpha-1})= V_2$, $V((\alpha-1)K_1)= V_3$ and $V(K_2)= V_4$. Obviously, $\widetilde{H}\in \Gcal^\alpha_{n,k'}$. Set $V_1'=V(K_{k'-1})=V_1\setminus \{u_{k'}\}$. From \eqref{eq:1}, we obtain that $\lambda_1(\widetilde{H}) \ge \frac{\alpha^2+4n-4k'-3\alpha-6}{n}+n-1>n-1$. Partition $\Pi_1: V(\widetilde{H}) = V_1 \cup V_2 \cup V_3 \cup V_4 $  and partition
 $\Pi_2: V(\widetilde{H}) =\{u_{k'}\}\cup V_1' \cup  V_2 \cup V_3 \cup V_4 $ are two distinct equitable partitions of $\widetilde{H}$, and their corresponding equitable quotient matrices are
\[
M_{\Pi_1}(\widetilde{H})=
\begin{pmatrix}
k'-1 & n-k'-\alpha-1 & \alpha-1 & 2 \\
k' & n-k'-\alpha-2 & \alpha-1 & 4 \\
k' & n-k'-\alpha-1 & 2(\alpha-2) & 4 \\
k' & 2(n-k'-\alpha-1) & 2(\alpha-1) & 1 
\end{pmatrix}
\]
and
\[
M_{\Pi_2}(\widetilde{H})=
\begin{pmatrix}
0 & k'-1 & n-k'-\alpha-1 & \alpha-1 & 2 \\
1 & k'-2 & n-k'-\alpha-1 & \alpha-1 & 2 \\
1 & k'-1 & n-k'-\alpha-2&\alpha-1 & 4 \\
1 & k'-1 & n-k'-\alpha-1& 2(\alpha-2) & 4 \\
1 & k'-1 & 2(n-k'-\alpha-1) & 2(\alpha-1) & 1 
\end{pmatrix}.
\] 
Set $\det\bigl(x I_{4}-M_{\Pi_1}(\widetilde{H})\bigr)=f(x)$. A direct calculation gives
$\det\bigl(x I_{5}-M_{\Pi_2}(\widetilde{H})\bigr)=(x+1)f(x)$. 

\noindent\textbf{Case 3.} $h \ge 0$ and $a_1 = 0$.

Notice that $h=k'-\alpha$. When $k'-\alpha=0$, the partition $\Pi_3: V(G^*) = (T'\cap S') \cup C_1 \cup C_2 $ 
is an equitable partition of $G^*$, and the corresponding equitable quotient matrix is given by
\[
M_{\Pi_3}(G^*)=
\begin{pmatrix}
2(\alpha-1) & n-k'-2 & 2 \\
\alpha & n-k'-3 & 4 \\
 \alpha & 2(n-k'-2) & 1 
\end{pmatrix}.
\]
In view of Lemma \ref{lemma:2}, one finds that $\lambda_1(G^*)$ is the largest root $\lambda_1(M_{\Pi_3}(G^*))$ of $\det(x I_{3}-M_{\Pi_3}(G^*))= 0$. 
When $k'-\alpha \ge 1$, the partition $\Pi_4: V(G^*) = (T'\setminus S') \cup (T'\cap S') \cup C_1 \cup C_2 $ 
is an equitable partition of $G^*$, and the corresponding equitable quotient matrix is given by
\[
M_{\Pi_4}(G^*)=
\begin{pmatrix}
k'-\alpha-1 & \alpha & n-k'-2 & 2 \\
k'-\alpha &2(\alpha-1) & n-k'-2 & 2 \\
k'-\alpha & \alpha & n-k'-3 & 4 \\
k'-\alpha & \alpha & 2(n-k'-2) & 1 
\end{pmatrix}.
\]
Similarly, $\lambda_1(G^*)$is the largest root $\lambda_1(M_{\Pi_4}(G^*))$ of $\det(x I_{4}-M_{\Pi_4}(G^*))= 0$. 
In addition, when $k'=\alpha$, one can easily get $\det\bigl(x I_{4}-M_{\Pi_4}(G^*)\bigr)=(x+1)(\det\bigl(x I_{3}-M_{\Pi_3}(G^*)\bigr))$ and $\lambda_1(M_{\Pi_3}(G^*))= \lambda_1(M_{\Pi_4}(G^*))$.
Hence, $\lambda_1(G^*)=\lambda_1(M_{\Pi_4}(G^*))$ for all $k'\ge \alpha$. Set $\det\bigl(x I_{4}-M_{\Pi_4}(G^*)\bigr)=h_1(x)$ and $G_1(x)=f(x)-h_1(x).$ A direct calculation gives
\begin{align*}
G_1(x)=f(x)-h_1(x)=x^{3}+A_1x^{2}+A_2x+A_3,
\end{align*}
where $A_1=2\alpha-n+1$, $A_2=22\alpha-6\alpha^{2}+6k'-8n-1$ and $A_3=16\alpha+6k'-7n-8\alpha k'+2\alpha n+2k'n+4\alpha^{2}k'-2\alpha^{2}n-2\alpha^{2}-2k'^{2}-1.$ Thus, $G_1'(x)=3x^2+2A_1x$, $G_1''(x)=6x+2A_1+A_2$ and $G_1'''(x)=6>0$. Furthermore, combining with $\alpha \ge 2$, $k'-\alpha \ge 0$ and $n \ge k'+\alpha+3$, direct computation in  MATLAB R2018b \cite{Inc} yields $G_1'(n-1)>0$, $G_1''(n-1)>0$ and $G_1( \frac{\alpha^2+4n-4k'-3\alpha-6}{n}+n-1)>0$. It follows that $G_1(x)$ is monotone increasing when $x > n-1$. Since $\lambda_1(\widetilde{H}) \ge \frac{\alpha^2+4n-4k'-3\alpha-6}{n}+n-1>n-1$, we conclude that  $G_1(\lambda_1(\widetilde{H}))>0.$  By Lemma \ref{lemma:2}, $\lambda_1(\widetilde{H})$ are the largest root of $f(x)=0$. Thus, $\lambda_1(\widetilde{H})<\lambda_1(M_{\Pi_4}(G^*))=\lambda_1(G^*)$, which contradicts the choice of $G^*$. 

\noindent\textbf{Case 4.} $h = 1$ and $a_1 \ge 1$.

Notice that $k'=g+h \ge 2$ and $a_1=\alpha-(k'-1) \ge 1$. Then, $\alpha \ge k'$. The partition $\Pi_5: V(G^*) = (T'\setminus S') \cup (T'\cap S') \cup (C_1\cap S') \cup (C_1\setminus S') \cup C_2 $ 
is an equitable partitions of $G^*$, and the corresponding equitable quotient matrix is
\[
M_{\Pi_5}(G^*)=
\begin{pmatrix}
0&k'-1 & n-\alpha-3 &\alpha-k'+1& 2 \\
1&2(k'-2) &n-\alpha-3 &2(\alpha-k'+1) & 2 \\
1&k'-1 & n-\alpha-4 & \alpha-k'+1 & 4 \\
1&2(k'-1) & n-\alpha-3 & 2(\alpha-k') & 4 \\
1& k'-1& 2(n-\alpha-3)&2(\alpha-k'+1)&1
\end{pmatrix}.
\]
Set $\det\bigl(x I_{5}-M_{\Pi_5}(G^*)\bigr)=h_2(x)$ and $G_2(x)=(x+1)f(x)-h_2(x).$ By direct calculation, we obtain
\begin{align*}
G_2(x)=(x+1)f(x)-h_2(x)=B_1x^{3}+B_2x^{2}+B_3x+B_4,
\end{align*}
where $B_1=3\alpha-3$, $B_2=28\alpha+2k'+n-8\alpha k'-\alpha n+\alpha^{2}+2k'^{2}-25$, $B_3=59\alpha+4n-24\alpha k'-8\alpha n+6k'n+4\alpha k'^{2}-2k^{2}n+8\alpha^{2}+4k'^{2}-51$ and $B_4=30\alpha-2k'+n-16\alpha k'-5\alpha n+6k'n+4\alpha k'^{2}-2k'^{2}n+5\alpha^{2}+2k'^{2}-23.$ Thus, $G_2'(x)=3B_1x^2+2B_2x+B_3$, $G_2''(x)=6B_1x+2B_2$ and $G_2'''(x)=6(3\alpha-3)>0$. Furthermore, combining with $k' \ge 2$, $\alpha-k' \ge 0$ and $n \ge k'+\alpha+3$, direct computation in  MATLAB R2018b \cite{Inc} yields $G_2'(n-1)>0$, $G_2''(n-1)>0$ and $G_2( \frac{\alpha^2+4n-4k'-3\alpha-6}{n}+n-1)>0$. By a similar argument as in Case 3 of Claim \ref{claim:4}, we can deduce that $\lambda_1(\widetilde{H})<\lambda_1(G^*)$, a contradiction. Thus, the  claim holds.

So, $G^*\cong 
K_{k'} \vee \Big(\big(K_{n-k'-\alpha-1} \vee (\alpha-1)K_1\big) \cup K_2\Big)$. This completes the proof. 

\end{proof}

\section{Proof of Theorem \ref{thm:1}~\ref{thm:main:ii}: the case $n = k'+\alpha+2 $. } \label{sec:4}
\begin{lemma}\label{lemma:4}
Let $a,b,k'$ be positive integers with $a+b=\alpha$. Define
\[G(a,b)\cong K_{k'}\lor \big((K_1\lor aK_1)\cup(K_1\lor bK_1)\big).\]
Then $\lambda_1\big(G(a,b)\big)$ attains its minimum if and only if $|a-b|\le 1$.
\end{lemma}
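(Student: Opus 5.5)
Throughout, $\lambda_1$ is used exactly as in Section~\ref{sec:1}: the largest eigenvalue of the matrix $D$ associated with the graph. The plan is to reduce $\lambda_1(G(a,b))$ to the largest root of a small polynomial with Lemma~\ref{lemma:2}, and then to show that this root strictly decreases as the product $ab$ increases, with $a+b=\alpha$ held fixed. Since $a+b$ is fixed, $ab$ is maximized exactly when $|a-b|\le 1$, so this monotonicity gives both the minimum and the ``only if'' part.

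\textbf{Step 1 (quotient matrix).} Partition $V(G(a,b))$ into $V(K_{k'})$, the centre $c_1$, the $a$ leaves $A$ of the first star, the centre $c_2$, and the $b$ leaves $B$ of the second star. This partition is equitable. Reading off distances ($1$ from $K_{k'}$ to every other vertex, $1$ from a centre to its own leaves, and $2$ for every other nonadjacent pair), the quotient matrix is
\[
M(a,b)=\begin{pmatrix}
k'-1&1&a&1&b\\
k'&0&a&2&2b\\
k'&1&2(a-1)&2&2b\\
k'&2&2a&0&b\\
k'&2&2a&1&2(b-1)
\end{pmatrix}.
\]
By Lemma~\ref{lemma:2}, $\lambda_1(G(a,b))$ is the largest root of $\varphi_{a,b}(x)=\det(xI_5-M(a,b))$.

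\textbf{Step 2 (symmetrisation).} Interchanging the roles of the two stars shows that $\varphi_{a,b}$ is symmetric in $a$ and $b$. Each entry of $M(a,b)$ is affine in $a$ or in $b$, and $a$ and $b$ occur in different columns, so $\varphi_{a,b}$ has degree at most one in each of $a$ and $b$ separately. Hence
\[
\varphi_{a,b}(x)=P(x)+(a+b)R(x)+ab\,Q(x),
\]
where $P,R,Q$ depend only on $x$ and $k'$. I will compute $Q$ explicitly; a convenient way is to take the coefficient of $ab$ via the cofactor expansion in columns $3$ and $5$.

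\textbf{Step 3 (sign of $Q$ and comparison of roots).} Put $n=k'+\alpha+2$. The bound \eqref{eq:1} gives $\lambda_1(G(a,b))>n-1$, since every nonadjacent pair is at distance $2$ and such pairs exist. I then aim to show $Q(x)>0$ for all $x\ge n-1$.

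Now take $ab<a'b'$ with $a+b=a'+b'=\alpha$. Then
\[
\varphi_{a',b'}(x)-\varphi_{a,b}(x)=(a'b'-ab)\,Q(x)>0 \quad\text{for } x\ge n-1.
\]
Let $\rho=\lambda_1(G(a',b'))$, the largest root of the monic polynomial $\varphi_{a',b'}$. Then $\varphi_{a',b'}(x)>0$ for $x>\rho$. Evaluating the displayed inequality at $x=\lambda_1(G(a,b))>n-1$ gives $\varphi_{a',b'}(\lambda_1(G(a,b)))>0$. This alone does not rule out $\lambda_1(G(a,b))$ lying below $\rho$ between two roots. I would close this gap with Lemma~\ref{lemma:3} applied to $M(a',b')$ and its positive Perron vector: its row sums give a crude upper bound for $\rho$, and I would check that $\varphi_{a',b'}$ has no root in $[n-1,\infty)$ other than $\rho$ itself. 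Granting that, $\lambda_1(G(a,b))>\rho$, which is the desired strict monotonicity.

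\textbf{Main obstacle and fallback.} The hard part is Step~3: pinning down the sign of $Q$ on $[n-1,\infty)$ and excluding spurious roots of $\varphi_{a',b'}$ there. If the polynomial bookkeeping becomes unwieldy, I will instead move one leaf from the larger star to the smaller one. Suppose $a\ge b+2$, and let $G'=G(a-1,b+1)$ with Perron vector $Y$. Then
\[
\lambda_1(G(a,b))-\lambda_1(G')\ \ge\ Y^T\bigl(D(G(a,b))-D(G')\bigr)Y .
\]
Only distances involving the moved leaf $w$ change. Writing $y_A$ and $y_B$ for the Perron entries of the leaves remaining in the two stars, the right-hand side reduces to
\[
2y_w\bigl[(y_{c_2}-y_{c_1})+(a-1)y_A-(b+1)y_B\bigr],
\]
up to the obvious adjustments. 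Positivity of this bracket should follow from the eigen-equations of $M(a-1,b+1)$ using $a-1>b$, giving a strict decrease at each balancing step.
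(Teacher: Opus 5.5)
Your main route (equitable quotient matrix, multilinearity in $a$ and $b$, reduction to the sign of the $ab$-coefficient) differs from the paper's and can be made to work. As written, though, Step~3 carries the whole argument and is not carried out, and the way you propose to finish it would not finish it.

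\textbf{The root comparison is not closed.} Lemma~\ref{lemma:3} only applies to an eigenvalue with a nonnegative eigenvector, so it only locates $\rho=\lambda_1(G(a',b'))$ itself. It says nothing about another root of $\varphi_{a',b'}$ in $[n-1,\rho)$. So $\varphi_{a',b'}(\mu)>0$ at $\mu=\lambda_1(G(a,b))$ still does not give $\mu>\rho$; you would need an extra fact such as $\lambda_2(D)\le n-3$ for diameter-$2$ graphs. The simple fix is to evaluate at the other point:
\[
\varphi_{a,b}(\rho)=\varphi_{a',b'}(\rho)-(a'b'-ab)Q(\rho)=-(a'b'-ab)Q(\rho)<0 .
\]
Since $\varphi_{a,b}$ is monic, it has a real root larger than $\rho$. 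That root is an eigenvalue of $M(a,b)$, hence of $D(G(a,b))$, so $\mu>\rho$.

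\textbf{The sign of $Q$ is not established.} The fix above needs $Q(\rho)>0$, which you state as an aim but never compute. Replace column $3$ of $xI_5-M(a,b)$ by $(-1,-1,-2,-2,-2)^T$ and column $5$ by $(-1,-2,-2,-1,-2)^T$, then expand. This gives
\[
Q(x)=(x-k'+1)(4x+1)+2k'x=4x^2+(5-2k')x+1-k',
\]
which is positive for $x\ge k'$, in particular at $\rho>n-1>k'$. With these two additions your argument is complete. It also gives strictness automatically and compares any two pairs with $a+b=a'+b'$ at once.

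\textbf{Your fallback is the paper's actual proof, but you miscomputed the quadratic form.} The moved leaf $w$ is at distance $2$ from every other leaf in both $G(a,b)$ and $G(a-1,b+1)$. So only $d(w,c_1)$ and $d(w,c_2)$ change, and
$Y^T\bigl(D(G(a,b))-D(G(a-1,b+1))\bigr)Y=2y_w(y_{c_2}-y_{c_1})$, with no $(a-1)y_A-(b+1)y_B$ terms. The paper derives $y_{c_2}\ge y_{c_1}$ from the eigen-equations; $c_2$ is the centre of the smaller star. This is an equality when $a=b+2$, because then $G(a-1,b+1)=G(b+1,b+1)$ is symmetric. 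So ``positivity of the bracket'' fails in exactly that case. Strictness there needs a separate argument, which the paper supplies: equality of spectral radii would make $Y$ an eigenvector of $D(G(a,b))$. That is impossible, since the $c_1$-row of $\bigl(D(G(a,b))-D(G(a-1,b+1))\bigr)Y$ equals $-y_w\neq0$.

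\textbf{Comparison of the two routes.} The paper avoids any $5\times5$ determinant but needs sign-chasing on Perron entries plus an equality-case argument. Your polynomial route, once completed, replaces both with one short determinant computation.
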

\begin{proof}[\bfseries Proof ]
For any positive integers $a,b,k'$ with $a+b=\alpha$, let $G(a,b)$ be the graph with minimum distance spectral radius. Without loss of generality, assume $a\ge b \ge 1$. When $a\ge b+2 \ge 3$, it remains to prove that $\lambda(G(a-1,b+1)) < \lambda(G(a,b))$. 
Let $Y$ be the perron vector of $G(a-1,b+1)$ and $V(G(a-1,b+1)) =V( K_{k'})\cup \{u\} \cup V((a-1)K_1)\cup \{w\} \cup V((b+1)K_1)$.
Here, $u$ is adjacent to each vertex in $V((a-1)K_1)$, and $w$ is adjacent to each vertex in $V((b+1)K_1)$. Let $y_v$ denoting the entry of $Y$ corresponding to vertex $v\in V(G(a-1,b+1))$.
By symmetry, we set $y_v=y_1$ for any $v\in V((a-1)K_1)$, $y_v=y_2$ for any $v\in V((b+1)K_1)$, $y_v=y_0$ for any $v\in K_{k'}$, $y_u=y_1'$ and $y_w=y_2'$.
Set $\lambda_1(G(a-1,b+1))=\lambda_1$. From $D(G(a-1,b+1))Y=\lambda_1 Y$, we obtain 
\begin{align*}
\lambda_1 y_1' &= k'y_0 + (a-1) y_1 + 2 y_2' + 2(b+1) y_2, \\
\lambda_1 y_2' &= k'y_0 + 2(a-1) y_1 + 2y_1' + (b+1) y_2,\\
\lambda_1 y_1  &= k'y_0 + 2(a-2)y_1 + 2 y_2' + 2(b+1) y_2 + y_1', \tag{10}\label{eq:10}\\
\lambda_1 y_2  &= k'y_0 + 2(a-1)y_1 + 2 y_1' + 2by_2 + y_2', \tag{11}\label{eq:11}
\end{align*}
which yield $(\lambda_1-a+3)y_1=(\lambda_1+1)y_1'$, $(\lambda_1+2)(y_2-y_1')=(a-1)y_1-y_2'$ and $(\lambda_1+2)(y_2'-y_1')=(a-1)y_1-(b+1)y_2$.
From \eqref{eq:10} and \eqref{eq:11}, we obtain $\lambda_1 > 2(a-2)$ and $\lambda_1 > 2b$. Consequently, $\lambda_1-a+3 > a-1> 0$ and $\lambda_1 > b+1 >0$. Therefore, we conclude that 
\[\left[\lambda_1+2-\frac{b+1}{\lambda_1+2}\right](y_2'-y_1')=\frac{(a-b-2)(\lambda_1+1)}{\lambda_1-a+3} y_1'\ge 0.\]
Since $\lambda_1+2-\frac{b+1}{\lambda_1+2}=\frac{\lambda_1^2+4\lambda_1-b+3}{\lambda_1+2}>0$, this demonstrates that $y_2'\ge y_1'$. Thus,
\begin{align*}
\frac{1}{2}(\lambda_1(G(a,b)) -\lambda_1(G(a-1,b+1)))& \ge\frac{1}{2}Y^T (D(G(a,b)) - D(G(a-1,b+1))) Y\\ &=y_2(y_2'-y_1')\ge 0.
\end{align*}
If $\lambda_1(G(a,b))=\lambda_1(G(a-1,b+1))$, then $Y$ is also a Perron vector of $G(a,b)$, which implies $D(G(a,b))Y=\lambda_1(G(a,b)) Y$ and yields a contradiction. Thus we obtain $\lambda_1(G(a,b))>\lambda_1(G(a-1,b+1))$, which contradicts the minimality  of $\lambda_1(G(a,b))$.  Therefore, $|a-b|\le 1$.
\end{proof}
\begin{proof}[\bfseries Proof of Theorem \ref{thm:1}~\ref{thm:main:ii}]Suppose that $G\in\Gcal^{\alpha}_{n,k'}$ is a graph that attains the minimum distance spectral
radius, where $n=k'+\alpha+2$. Let $S''$ be an independent set of $G$ with $|S''|=\alpha$, and let $T''$ be an essential vertex cut of $G$ with $|T''|=k'$ such that $G-T''$ contains $r$ connected components $B_1, B_2, \dots, B_r$, at least two of which are non-trivial connected components. For convenience, we set $|T''\cap S''|=g'$, $|T''\setminus S''|=h'$ , $|V_{B_i}\cap S''|=a_i'$ and $|V_{B_i}\setminus S''|=b_i'$ for $1\leq i\leq r$. By arguments similar to the proofs of Claim \ref{claim:1}  and Claim \ref{Claim:2} in Theorem \ref{thm:1}~\ref{thm:main:i}, we obtain $r = 2$ , $G[T'']\cong K_h'\vee g'K_1$, $B_i\cong K_{b_i'}\vee a_i'K_1$ and $G[V_{B_i}\cup T'']\cong K_{h'+b_i'}\vee(g'+a_i')K_1$ for $1\leq i\leq 2$. Set $K_{h'} = \{u_1, u_2, \dots, u_{h'}\}$, $T'' \cap S''= \{u_1', u_2', \dots, u'_{g'}\}$, $K_{b_i'} = \{w_i^1, w_i^2, \dots, w_i^{b_i'}\}$ and $a_i'K_i = \{v_i^1, v_i^2, \dots,v_i^{a_i'}\}$ for $1\leq i\leq 2$.
 It is clear that $b_1'+b_2'=g'+2$. We assert that $g'=0$. 
 Otherwise, $g' \ge 1$. If $b_1'= b_2'=2$ and $a_1'=a_2'=0$, set  $H'=G+\{u_1'u_2'\}$, then $\alpha(H')=2=\alpha(G)$ and $\lambda_1(G)> \lambda_1(H')$ by Lemma \ref{lemma:1}, which yields a contradiction. 
 Assume that $E_1' = \{w_1^i v \mid v \in V_{B_2}, 2 \le i \le b_1' \} + \{w_2^i v \mid v \in V_{B_1}, 2 \le i \le b_2' \}$ and
 $E_2' = \{u_i' w_2^1 \mid 1 \le i \le b_1'-1\} + \{u_i' w_1^1 \mid b_1' \le i \le g'\}$ . 
 Let $H = G - E_2' + E_1'$. Obviously, $H\cong K_{k'}\lor \big((K_1\lor (a_1'+b_1'-1)K_1)\cup(K_1\lor (a_2'+b_2'-1)K_1)\big)$ , $H\in\Gcal^{\alpha}_{n,k'}$ and $\lambda_1(H)>\lambda_1(K_{k'+1})=k'>0$ follows from Lemma \ref{lemma:1}.
 Let $X$ be the Perron vector of $D(H)$, with \(x_v\) denoting the entry of $X$ corresponding to vertex $v\in V(H)$. By symmetry, set $x_v=x_0$ for any $ v \in V(K_{k'})$, $x_v=x_1'$ for any $v \in V((a_1'+b_1'-1)K_1),x_v=x_2'$ for any $v \in V((a_2'+b_2'-1)K_1)$. Therefore, by $D(H)X =\lambda_1(H)X$, we get
 \begin{align*}
\lambda_1(H) x_0 &= (k'-1)x_0 + x_{w_1^1} + (a_1'+b_1'-1)x_1' + x_{w_2^1}+ (a_2'+b_2'-1)x_2', \tag{12}\label{eq:12}\\
\lambda_1(H) x_{w_1^1} &= k'x_0 + (a_1'+b_1'-1)x_1'+ 2x_{w_2^1}+ 2(a_2'+b_2'-1)x_2', \\
\lambda_1(H)x_{w_2^1}  &= k'x_0 +2 x_{w_1^1} + 2(a_1'+b_1'-1)x_1' + (a_2'+b_2'-1)x_2', \\
\lambda_1(H)x_1'  &= k'x_0 + x_{w_1^1} + 2(a_1'+b_1'-2)x_1' +2x_{w_2^1}+ 2(a_2'+b_2'-1)x_2', \tag{13}\label{eq:13}\\
\lambda_1(H)x_2'  &= k'x_0 +2 x_{w_1^1} + 2(a_1'+b_1'-1)x_1' +x_{w_2^1}+ 2(a_2'+b_2'-2)x_2',\tag{14}\label{eq:14}
\end{align*}
from which we obtain that
\begin{align*}
x_{w_1^1} - x_0 &= \dfrac{x_{w_2^1}+(a_2'+b_2'-1)x_2'}{\lambda_1(H)+1} > 0, \tag{15}\label{eq:15} \\
x_1' - x_{w_1^1} &= \dfrac{(a_1'+b_1'-2)x_1'}{\lambda_1(H)+1} \ge 0, \tag{16}\label{eq:16} \\
2x_0 - x_1' &= \dfrac{kx_0 + x_{w_1^1} + x_1'}{\lambda_1(H)+1} > 0 \tag{17}\label{eq:17} \\
\end{align*}
and
\[(\lambda_1(H)+1)\big(x_{w_1^1}+x_{w_2^1}-2x_0\big) = \big(\lambda_1(H)+1-k\big)x_0. \tag{18}\label{eq:18}\]
Using computations analogous to \eqref{eq:15}--\eqref{eq:17}, we readily obtain $2x_0>x_2'\ge x_{w_2^1}>x_0$. Moreover,
\begin{align*}
\frac{1}{2}(\lambda_1(G) -\lambda_1(H))& \ge \frac{1}{2} X^T (D(G) - D(H)) X \\ 
&=(b_1'-1)(b_2'-1)x_0^2+(b_1'-1)(a_2'x_0x_2'+x_0x_{w_2^1}-x_1'x_{w_2^1})\\&+(b_2'-1)(a_1'x_0x_1'+x_0x_{w_1^1}-x_2'x_{w_1^1}).
\end{align*}
For convenience, set $(b_1'-1)(b_2'-1)x_0^2+(b_1'-1)(a_2'x_0x_2'+x_0x_{w_2^1}-x_1'x_{w_2^1})+(b_2'-1)(a_1'x_0x_1'+x_0x_{w_1^1}-x_2'x_{w_1^1})=\Phi$.
If $b_1'=1$ or $b_2'=1$, then without loss of generality we may assume that $b_1'=1$. Consequently, $b_2'=g'+1 \ge 2$ and $a_1' \ge 1$. Combining these inequalities with $x_2'< 2x_0$, $x_1'< 2x_0$,
 $x_{w_2^1}\le x_2'$ and $x_{w_1^1}\le x_1'$, one can readily deduce that $\Phi > 0$. It follows that $\lambda_1(G) > \lambda_1(H)$, which contradicts the minimality of $\lambda_1(G)$. Hence, $b_1'> 1$ and $b_2'> 1$.
The sign of $ \Phi$ is considered in two cases. 

\medskip
\noindent\textbf{Case 1.} $a_1' \neq 0$ or $a_2' \neq 0$.

We now consider several subclasses of this case.

\noindent\textbf{Subcase 1.1.} $a_1 '\ge 1$  and $a_2' \ge 1$.

By analysis similar to the above, $\Phi>0 $ is easily obtained. Hence, $\lambda_1(H)<\lambda_1(G)$, a contradiction.
\medskip

\noindent\textbf{Subcase 1.2.} Either $a_1'=0$ or $a_2'=0$, but not both.

Without loss of generality, we assume that $a_2'= 0$. Thus, $a_1'\ge 1$. When $b_2'\ge 3$ or $x_0^2-x_{w_2^1}(x_1'-x_0) \ge 0$, combining $2x_0>x_i'\ge x_{w_i^1}>x_0$ for $i\in\{1,2\}$ with $b_1'>1$, we deduce that $ \Phi > 0$. Next consider the case $b_2'= 2$ and $x_0^2-x_{w_2^1}(x_1'-x_0)<0$.
By symmetry, we get $x_{w_2^1}=x_2'$. Combining Equations \eqref{eq:12}, \eqref{eq:13} and \eqref{eq:14}, we have
$(\lambda_1(H)-a_1'-b_1'+3)x_1'=(\lambda_1(H)+1)x_0+2x_2'$ and $(\lambda_1(H)+3)x_2'=(2\lambda_1(H)+2-k')x_0$, from which we conclude
\begin{align*}
&(\lambda_1(H) +3)(x_0^2+x_0x_2'-x_2'x_1')\\
&=(\lambda_1(H) +3)x_0^2+(\lambda_1(H) +3)x_0x_2'-(\lambda_1(H) +3)x_2'x_1'\\
&= (\lambda_1(H)-a_1'-b_1'+3)x_0x_1'+2x_0^2-2x_2'x_1'+(\lambda_1(H)+1)x_2'(x_0-x_1')\\
&=(\lambda_1(H)-a_1'-b_1'+3)x_0x_1' -(2\lambda_1(H)+2-k)x_0x_1' + 2x_0^2+(\lambda_1(H)+1)x_2'x_0\\
&=(k-a_1'-b_1'+2)x_0x_1'-(\lambda_1(H)+1)x_0x_1'+2x_0^2+(\lambda_1(H)+1)x_2'x_0\tag{19}\label{eq:19}. \\
\end{align*}
Moreover, by \eqref{eq:13} and \eqref{eq:14}, we obtain
\begin{align}
x_1'-x_2'=\frac{x_1'-x_{w_1^1}}{\lambda_1(H)+3} \tag{20}\label{eq:20}.
\end{align}
Combining \eqref{eq:19} and \eqref{eq:20}, we obtain
\begin{align*}
\Phi=&(b_1'-1)(x_0^2-x_2'(x_1'-x_0 ))+(a_1'x_0x_1'-x_{w_1^1}(x_2'-x_0)\\
\ge&(k-1)(x_0^2-x_2'(x_1'-x_0 ))+(a_1'+b_1'-k')x_0x_1'-x_{w_1^1}(x_2'-x_0)(\text{since }  1<b_1' \le k' ).\\
=&-(\lambda_1(H) +4-k')(x_0^2-x_2'(x_1'-x_0 ))+2x_0^2+2x_0x_1'\\
&-(\lambda_1(H)+1)x_0(x_1'-x_2')-x_{w_1^1}(x_2'-x_0)\\
>&2x_0^2+2x_0x_1'-\frac{(\lambda_1(H)+1)(x_1'-x_{w_1^1})x_0}{\lambda_1(H)+3} -2x_0^2\\
&\quad (\text{since }\lambda_1(H) > k' ,2x_0>x_2'>x_0 ,x_1'\ge x_{w_1^1}, x_{w_1^1}<2x_0  ).\\
>&2x_0^2+2x_0x_1'-x_0x_1'-2x_0^2\\
=&x_0x_1'>0.
\end{align*}
Then $\Phi>0$ and $\lambda_1(H)<\lambda_1(G)$, a contradiction.

\medskip
\noindent\textbf{Case 2.} $a_1'=a_2'=0$. 

Since $b_i'\ge 2$ for $1\le i \le 2$, we readily obtain \[\min_{1\leq i\leq n}S_i(D(H))=k'+b_1'+b_2'-1< k'+2(b_1'+b_2')-3=\max_{1\leq i\leq n}S_i(D(H)).\] Thus, by Lemma \ref{lemma:3}, we have $k'+b_1'+b_2'-1<\lambda_1(H)<k'+2(b_1'+b_2')-3$. Notice that $k'\ge b_1'+b_2'-2$. Then, $k'+2(b_1'+b_2')-3>\lambda_1(H)>k'+b_1'+b_2'-1\ge 2(b_1'+b_2')-3.$

\medskip
\noindent\textbf{Subcase 2.1.} $b_1'\ge 3$ and $b_2'\ge 3$.

 It follows from \eqref{eq:18} that
 \begin{align*}
 (\lambda_1(H) +1)(x_{w_1^1}-x_0)=&(\lambda_1(H)+1 -k')x_0-(\lambda_1(H) +1)(x_{w_2^1}-x_0)\\
 \le &(\lambda_1(H) +3-b_1'-b_2')x_0-(\lambda_1(H) +1)(x_{w_2^1}-x_0)(\text{since } k' \ge b_1'+b_2'-2).
\end{align*}
Integrating with \eqref{eq:16} , then
\[x_1'-x_0=\frac{(\lambda_1(H) +1)x_{w_1^1}}{\lambda_1(H) +3-b_1'}-x_0=\frac{(\lambda_1(H) +1)(x_{w_1^1}-x_0)+(b_1'-2)x_0}{\lambda_1(H) +3-b_1'}.\]
Thus,
\begin{align*}
x_{w_2^1}(x_1'-x_0) \le & \frac{(x_0+x_{w_2^1}-x_0)((\lambda_1(H) +1-b_2')x_0-(\lambda_1(H) +1)(x_{w_2^1}-x_0))}{\lambda_1(H) +3-b_1'}\\
=&\frac{(\lambda_1(H) +1-b_2')x_0^2-b_2'x_0(x_{w_2^1}-x_0)-(\lambda_1(H) +1)(x_{w_2^1}-x_0)^2}{\lambda_1(H) +3-b_1'}\\
<& \frac{(\lambda_1(H) +1-b_2')x_0^2}{\lambda_1(H) +3-b_1'}.
\end{align*}
 Similarly,
\[x_{w_1^1}(x_2'-x_0) <  \frac{(\lambda_1(H) +1-b_1')x_0^2}{\lambda_1(H) +3-b_2'}.\]
For convenience, let $p=b_1'-1 \ge2$, $q=b_2'-1 \ge 2$ and $s=p+q \ge 4$. Hence,
\begin{align*}
\Phi=&(b_1'-1)(b_2'-1)x_0^2-(b_1'-1)x_{w_2^1}(x_1'-x_0)-(b_2'-1)x_{w_1^1}(x_2'-x_0)\\
>&x_0^2(pq-\frac{p(\lambda_1(H)-q)}{\lambda_1(H) +2-p}- \frac{q(\lambda_1(H) -p)}{\lambda_1(H) +2-q})\\
=&\frac{F(\lambda_1(H)+1)x_0^2}{(\lambda_1(H) +2-p)(\lambda_1(H) +2-q)},
\end{align*}
where 
$F(\lambda_1(H)+1)=(pq-p-q)(\lambda_1(H) +1)^2-pq(p+q-6)((\lambda_1(H) +1)+p^2q^2-2p^2q-2pq^2+pq+p+q$. Then by simple calculation,
\[F'(\lambda_1(H)+1)=2(pq-s)(\lambda_1(H)+1)-pq(s-6).\]
Obviously, $F'(\lambda_1(H)+1)>0$ whenever $s\le 6$. For $s> 6$, we have 
\begin{align*}
F'(\lambda_1(H)+1)>&2(pq-s)(2s+2)-pq(s-6)\quad (\text{since }\lambda_1(H)+1 > 2s+2)\\
=&pq(3s+10)-4s(s+1)\\
\ge&2((s+1)^2-21)\quad (\text{since } pq\ge 2s-4)\\
>&0.
\end{align*}
As $p\ge2$ and $q\ge2$, it follows that $F(2s+2)=2(p-2)^3(q-2)+5(p-2)^2(q-2)^2+36(p-2)(q-2)(p+q-4)+20(p-2)^2+2(p-2)(q-2)^3+161(p-2)(q-2)+111(p+q-4)+20(q-2)^2+72>0$. Furthermore, $F(\lambda_1(H)+1)>F(2s+2)>0$. Thus, $\Phi>0$ and $\lambda_1(H)<\lambda_1(G)$, a contradiction.

\medskip
\noindent\textbf{Subcase 2.2.} $\min\{b_1',b_2'\}=2$ and $b_1'+b_2' \ge 5$.

 Without loss of generality, assume that $2 = b_2'  < b_1'$ . By symmetry, we get $x_{w_2^1}=x_2'$. From \eqref{eq:12}, \eqref{eq:14},\eqref{eq:15}, $x_{w_1^1}\le x_1'$ and $x_2'<2x_0$, we obtain
\begin{align*}
x_{w_1^1}(x_2'-x_0)
=\frac{x_{w_1^1}(x_{w_1^1}+(b_1-1)x_1')}{\lambda_1(H) +1}\le\frac{b_1'x_1'x_{w_1^1}}{\lambda_1(H) +1} <\frac{(\lambda_1(H) +5)b_1'x_0x_1'}{(\lambda_1(H) +1)^2}
\end{align*}
and
\begin{align*}
\frac{x_{w_1^1}}{x_1'}=\frac{\lambda_1(H) +3-b_1'}{\lambda_1(H) +1}.
\end{align*}
Then,
\begin{align*}
\frac{x_{w_1^1}+x_1'+2x_0}{x_1'}>\frac{\lambda_1(H) +3-b_1'}{\lambda_1(H) +1}+2=3-\frac{b_1'-2}{\lambda_1(H) +1}\quad(\text{since }2x_0 > x_1').
\end{align*}
Combining this with \eqref{eq:19}, \eqref{eq:20}, $x_1'\ge x_{w_1^1}$ and $k' \ge b_1'$, we have
\begin{align*}
(\lambda_1(H) +3)(x_0^2+x_0x_2'-x_2'x_1')
>&2x_0x_1'-x_0(x_1'-x_{w_1^1})+2x_0^2\\
=&x_0(x_1'+x_{w_1^1}+2x_0)\\
>&\frac{3x_1'x_0(\lambda_1(H) +1)-(b_1'-2)x_0x_1'}{\lambda_1(H) +1}.
\end{align*}
To simplify notation, let $m = b_1'-1\ge 2$. Hence,
\begin{align*}
\Phi=&m(x_0^2+x_0x_2'-x_2'x_1')-x_{w_1^1}(x_2'-x_0)\\
>&\frac{mx_0x_1'(3\lambda_1(H)+4-m)}{(\lambda_1(H)+1)(\lambda_1(H)+3)}-\frac{(\lambda_1(H)+5)(m+1)x_0x_1'}{(\lambda_1(H)+1)^2}\\
=&\frac{F_1(\lambda_1(H)+1)x_0x_1'}{(\lambda_1(H)+1)^2(\lambda_1(H)+3)},
\end{align*}
where
\begin{align*}
 F_1(\lambda_1(H)+1)=&(2m-1)(\lambda_1(H)+1)^2-(m^2+5m+6)(\lambda_1(H)+1)-8m-8\\
 &=(2m-1)(\lambda_1(H)-2m-3)^2+7(m-1)(m+2)(\lambda_1(H)-2m-3)\\
 &+6m^3+14m^2-24m-48\\
 &>0\quad(\text{since }m \ge 2 ,\lambda_1(H)> 2b_1'+1=2m+3 ).
\end{align*}
Therefore $\Phi>0$ and $\lambda_1(H)<\lambda_1(G)$, a contradiction. Hence $g'=0$, i.e., $S''\cap T''=\emptyset$ and $b_1'+b_2'=g'+2=2$. Combining this with $b_i'\ge 1$, we get $b_1'= b_2'=1$.
By the Lemma \ref{lemma:4} and the minimality of $\lambda_1(G)$, we conclude that $G\cong 
K_{k'} \vee ((K_1 \vee \lfloor \alpha/2 \rfloor K_1) \cup (K_1 \vee \lceil \alpha/2 \rceil K_1))
$.

\end{proof}

\end{document}